\newcommand{\e}{\varepsilon}
\renewcommand{\div}{\mathop{\mathrm{div}}}
\newcommand{\osc}{\mathop{\mathrm{osc}}}
\newcommand{\bke}[1]{\left( #1 \right)}
\newcommand{\bkt}[1]{\left[ #1 \right]}
\newcommand{\norm}[1]{\| #1 \|}
\newcommand{\al}{\alpha}
\newcommand{\de}{\delta}
\newcommand{\ga}{{\gamma}}
\newcommand{\la}{\lambda}
\newcommand{\Om}{{\Omega}}
\newcommand{\om}{{\omega}}
\renewcommand{\th}{\theta}
\newcommand{\td}{\tilde}
\newcommand{\R}{{\mathbb R }}
\newcommand{\N}{{\mathbb N}}
\newcommand{\pd}{{\partial}}
\newcommand{\nb}{{\nabla}}
\newcommand{\lec}{\lesssim}
\newcommand{\ph}{{\varphi}}
\renewcommand{\div}{\mathop{\mathrm{div}}}
\newcommand{\curl}{\mathop{\mathrm{curl}}}
\newcommand{\lv}{{\bar v}}
\newcommand{\lp}{{\bar p}}
\newtheorem{theorem}{Theorem}
\newtheorem{lemma}[theorem]{Lemma}
\newcommand{\qed}{\hfill\fbox{}\par\vspace{0.3mm}}
\newenvironment{proof}{{\bf Proof.}} {\hfill\qed}
\newcommand{\ee}{\end{equation}}
\newcommand{\bey}{\begin{eqnarray}}
\newcommand{\eey}{\end{eqnarray}}
\newcommand{\beyn}{\begin{eqnarray*}}
\newcommand{\eeyn}{\end{eqnarray*}}
\newcommand{\be}{\begin{equation}}
\numberwithin{equation}{section}
\numberwithin{theorem}{section}
\numberwithin{definition}{section}
\begin{document}


\date{}

\title{Lower bounds on the blow-up rate of the axisymmetric \\ Navier-Stokes equations II}

\author{Chiun-Chuan Chen\thanks{
Department of Mathematics and Taida Institute for Mathematical
Sciences, National Taiwan University and National Center for
Theoretical Sciences, Taipei Office, email:
chchchen@math.ntu.edu.tw}\; , \quad Robert M.
Strain\thanks{Harvard University, email:
strain@math.harvard.edu}\;,\quad Tai-Peng Tsai\thanks{University
of British Columbia, email: ttsai@math.ubc.ca}\;, \quad Horng-Tzer
Yau\thanks{Harvard University, email: htyau@math.harvard.edu} }

\maketitle

\begin{abstract}
Consider axisymmetric strong solutions of the incompressible
Navier-Stokes equations in $\R^3$ with non-trivial swirl.  Let $z$
denote the axis of symmetry and $r$ measure the distance to the
$z$-axis.  Suppose the solution satisfies,
for some $0 \le \e \le 1$, $|v (x,t)| \le C_* r^{-1+\varepsilon }
|t|^{-\varepsilon /2}$ for $-T_0\le t < 0$ and $0<C_*<\infty$ allowed
to be large. We prove that $v$ is regular at time zero.
\end{abstract}

\section{Introduction}

The incompressible Navier-Stokes equations in {\it cartesian coordinates} are given by
\begin{equation} \tag{N-S}
\label{nse}
\pd_t v + (v \cdot \nb)v + \nb p = \Delta v, \quad \div v =0.
\end{equation}
The velocity field is $v(x,t)=(v_1, v_2, v_3):\mathbb{R}^3\times [-T_0,0) \to \R^3$  and
$p(x,t):\mathbb{R}^3\times [-T_0,0) \to \R$ is the pressure.  It is a long standing open question
to determine if solutions with large smooth initial data of finite energy
remain regular for all time.

In this paper we consider the special class of solutions which are {\it axisymmetric}.  This means, in {\it  cylindrical coordinates} $r,\th,z$ with
$(x_1,x_2,x_3)=(r\cos \th,r\sin \th,z)$, that the solution is of the form
\begin{equation} \label{v-cyl}
v(x,t) = v_r(r,z,t)e_r + v_\th(r,z,t)e_\th + v_z(r,z,t)e_z.
\end{equation}
In this coordinate system $r=\sqrt{x_1^2+x_2^2}$.
The components $v_r,v_\th,v_z$ do not depend upon $\th$ and the basis
 vectors $e_r,e_\th,e_z$ are
\[
e_r = \left(\frac {x_1}r,\frac {x_2}r,0\right),\quad
e_\th = \left(-\frac {x_2}r,\frac {x_1}r,0\right),\quad
e_z = (0,0,1).
\]
The main result of our paper
 shows that axisymmetric solutions must blow up faster
than the scale invariant rates which appears in Theorem \ref{mainthm} below.

For $R>0$ define $B(x_0,R)\subset \R^{3}$ as the ball of radius $R$
centered at $x_0$.  The parabolic cylinder is  $Q(X_0,R) = B(x_0,R)\times (t_0-R^2,t_0)\subset
\R^{3+1}$  centered at $X_0=(x_0,t_0)$. If the center is
the origin we use the abbreviations  $B_R=B(0,R)$ and $Q_R=Q(0,R)$.

\begin{theorem}\label{mainthm}
Let $(v,p)$ be an axisymmetric strong solution of the Navier-Stokes equations
\eqref{nse}  in $D=\R^3 \times (-T_0,0)$
with initial datum $v|_{t=-T_0}=v^0 \in H^{1/2}$ and $r v^0_\th(r,z)
\in L^\infty$.
Suppose the pressure satisfies $p \in
L^{5/3}(D)$ and $v$ is pointwise bounded by one of the following inequalities:
\begin{equation}\label{thm-a}
|v (x,t)| \le C_*{ |t|^{-1/2}}, \quad (x,t) \in D.
\end{equation}
\begin{equation}\label{thm-b}
\text{There is an $\e\in [0,1]$ such that} \hskip 2cm  |v (x,t)| \le C_*
r^{-1+\varepsilon } |t|^{-\varepsilon /2}, \quad (x,t) \in D.
\end{equation}
The constant $C_*<\infty$ is allowed to be large. Then $v
\in L^\infty(B_R \times [-T_0,0])$ for any $R>0$.
\end{theorem}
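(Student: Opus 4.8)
\noindent{\bf Outline of the proof.} The plan is to argue by contradiction through a parabolic blow-up and a Liouville-type theorem, after reducing to the symmetry axis. On any region $\{r\ge\de\}$ the bound \eqref{thm-b} reads $|v|\le C(\de)|t|^{-\e/2}$, which for $\e<1$ puts $v$ in a subcritical Serrin class locally (take $q=\infty$, $2<p<2/\e$); so by the local regularity theory (Serrin's criterion, with $p\in L^{5/3}$ supplying the suitable-weak-solution framework) $v$ is smooth on $\{r>0\}$ up to and including $t=0$. The endpoint $\e=1$, i.e.\ \eqref{thm-a}, is the Type-I axisymmetric case and is covered by the corresponding known theory. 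Hence it remains to bound $v$ near the axis, and after a translation in $z$ and a use of scaling invariance it suffices to show that the space-time origin is a regular point. Suppose it is not.

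I would first record the structural a priori control. The swirl $\Ga:=rv_\th$ satisfies $\pd_t\Ga+(v_re_r+v_ze_z)\cdot\nb\Ga=(\Delta-\tfrac2r\pd_r)\Ga$, in which the drift $v_re_r+v_ze_z$ is divergence free; the maximum principle gives $\|\Ga(\cdot,t)\|_{L^\infty}\le\|rv^0_\th\|_{L^\infty}=:M_0$ for all $t$, while \eqref{thm-b} gives $|\Ga|\le C_*r^\e|t|^{-\e/2}$. Now for $\la_k\downarrow 0$ set $v^{(\la_k)}(y,s)=\la_k v(\la_k y,\la_k^2 s)$ and $p^{(\la_k)}(y,s)=\la_k^2 p(\la_k y,\la_k^2 s)$; every hypothesis is scale invariant (the pointwise bound exactly, the pressure bound with a gain). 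Using the local energy inequality, smoothness off the axis, and the divergence-free structure to extract sufficient compactness, along a subsequence one obtains a limit $(\lv,\lp)$: an axisymmetric ancient solution on $\R^3\times(-\infty,0)$, smooth off the axis, still satisfying $|\lv|\le C_*r^{-1+\e}|s|^{-\e/2}$; and since the origin is singular, $\e$-regularity applied contrapositively forces $\lv\not\equiv 0$.

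The crucial point is that $\lv$ has no swirl, i.e.\ $\bar\Ga:=r\lv_\th\equiv 0$. Here the pointwise bound by itself is not enough, since $\Ga^{(\la)}(r,z,s)=\Ga(\la r,\la z,\la^2 s)$ and $|\Ga^{(\la)}|\le C_*r^\e|s|^{-\e/2}$ is itself scale invariant. Instead one uses that $\Ga$ vanishes on $\{r=0\}$ and solves the drift--diffusion equation above, whose (divergence-free) drift lies in the scaling-critical class $|v_re_r+v_ze_z|\le C_*r^{-1+\e}|t|^{-\e/2}$: a De Giorgi--Nash--Moser oscillation estimate for such equations, adapted to the axis, should give $\osc_{Q(X_0,\rho)}\Ga\le C\rho^{\al}$ for some $\al>0$, uniformly over axis centers $X_0$, whence $|\Ga^{(\la)}(y,s)|\lec(\la\,|(y,s)|)^{\al}\to 0$ and $\bar\Ga\equiv 0$ in the limit. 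I expect this to be the main obstacle: proving the oscillation estimate uniformly up to $t=0$ with the singular, time-weighted drift (the weight $|t|^{-\e/2}$ is $L^p_t$-integrable for $\e<1$ but borderline at $\e=1$), and, with it, guaranteeing enough compactness of the rescaled fields when $\e$ is small. For $\e>0$ the smallness $|\Ga|\lec r^\e|t|^{-\e/2}$ makes the swirl step clean; for $\e=0$ it is exactly the oscillation-decay argument of Part~I, where only $|\Ga|\le M_0$ is available.

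Finally, $\lv$ is an ancient axisymmetric zero-swirl solution with $|\lv|\le C_*r^{-1+\e}|s|^{-\e/2}$. For no-swirl axisymmetric flows the axis carries no singularity, so $\lv$ is smooth, $\omega_r=-\pd_z\lv_\th=0$ and $\omega_z=\tfrac1r\pd_r\bar\Ga=0$, and $\Omega:=\omega_\th/r$ solves the source-free equation $\pd_t\Omega+(\lv_re_r+\lv_ze_z)\cdot\nb\Omega=(\Delta+\tfrac2r\pd_r)\Omega$. Interior vorticity estimates applied to the bound for $\lv$ show $\Omega\to 0$ at spatial infinity, so the maximum principle on expanding parabolic cylinders gives $\Omega\equiv 0$, hence $\omega\equiv 0$, hence each $\lv(\cdot,s)$ is harmonic and vanishes at infinity, so $\lv\equiv 0$ --- contradicting $\lv\not\equiv 0$. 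Therefore the space-time origin, and by translation and scaling every point of the axis at time $0$, is regular, and $v\in L^\infty(B_R\times[-T_0,0])$ for every $R>0$.
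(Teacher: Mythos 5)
Your outline identifies the right objects---the swirl $\Gamma = rv_\th$, the rescaled vorticity $\Omega = \om_\th/r$, a parabolic blow-up at the axis---and the right milestone (H\"older decay of $\Gamma$ near the axis so the limit has no swirl), but it omits the step the paper spends two full sections on, and that step sits exactly where you flag the difficulty. Any oscillation estimate for $\Gamma$ requires control of the drift $b = v_r e_r + v_z e_z$; under \eqref{thm-b} this is $|b| \lesssim r^{-1+\e}|t|^{-\e/2}$, which for $\e$ near $1$ (and a fortiori under \eqref{thm-a}) is essentially $|t|^{-1/2}$ near the axis, too singular in time for any De~Giorgi/Nash/Moser scheme to close. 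You recognize this may be ``the main obstacle'' but do not resolve it, and then contradict yourself by calling the $\e>0$ swirl step ``clean'' even though you had just observed that $|\Gamma|\lesssim r^\e|t|^{-\e/2}$ is scale invariant and hence useless by itself. The paper resolves the obstacle by first proving (Sections~2 and 3) that \eqref{thm-a} or \eqref{thm-b} with $\e>0$ implies the sharper estimate \eqref{assumption1}, $|v| \le C_*(r^2-t)^{-1/2+2\e}|t|^{-\e}r^{-2\e}$, with this new $\e$ arbitrarily small: Lemma~\ref{th2-2} trades time singularity for space singularity via vorticity energy estimates, Theorem~\ref{th3-1} trades back via a Duhamel representation of the Stokes system with $A_p$-weighted bounds, and iterating the two drives $\e$ down. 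Only after this reduction does the drift lie in the classes used in the local maximum estimate (Lemma~\ref{weakH}) and in the Nash lower bound for $\Gamma$ (Lemma~\ref{NashlowL}, which needs $q<1$ in \eqref{bassum2}). Without this your argument has no basis at $\e$ near $1$.

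Your final step---a Liouville-type theorem for the ancient no-swirl limit via a maximum principle for $\Omega$ on expanding cylinders---is genuinely different from the paper's and is the Koch--Nadirashvili--Seregin--\v{S}ver\'ak route, which the authors explicitly contrast with their own. The paper instead feeds the $L^{20/19}$ bound \eqref{Om3} on $\Omega$ into the De~Giorgi local maximum estimate (Lemma~\ref{weakH}) to get $\Omega\in L^\infty$ on a fixed cylinder, bootstraps via a div--curl estimate and Sobolev embedding to $\lv\in L^\infty$, and then contradicts the $\e$-regularity criterion \eqref{eq2-4-1} by shrinking $R_*$. Your version also has an unfilled gap as stated: the maximum principle argument needs $\Omega$ to vanish at the parabolic boundary of expanding cylinders, but $|\lv|\le C_*r^{-1+\e}|s|^{-\e/2}$ gives no decay in $z$, and ``interior vorticity estimates'' give local boundedness, not decay at infinity. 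Finally, your $\e=0$ remark is misattributed: Part~I handles $|v|\le C_*(r^2-t)^{-1/2}$, not $|v|\le C_*r^{-1}$; the latter is treated by a separate, non-blow-up argument in the Appendix of the present paper.
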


We remark that the case $\varepsilon = 0$ is addressed in the
appendix; our proof in that specific case was obtained after a
preprint of \cite{KNSS} had appeared.  The assumption \eqref{thm-a} is
a special case of \eqref{thm-b} with $\e=1$; it is singled out for its
importance. We also remark that the exponent $5/3$ for the norm of $p$
can be replaced, but it is the natural exponent occurring in the
existence theory for weak solutions, see e.g.  \cite{MR673830}.

Recall the natural scaling of Navier-Stokes equations:  If $(v,p)$ is a solution to \eqref{nse}, then for any $\la>0$
the following rescaled pair is also a solution:
\begin{equation}
\label{rescale}
v^\la(x, t) = \la v(\la x, \la^2 t),\quad
p^\la(x,t) = \la^2 p (\la x, \la^2 t).
\end{equation}
Suppose a
solution $v(x,t)$ of the Navier-Stokes equations  blows up at  $X_0=(x_0,t_0)$.  Leray
\cite{JFM60.0726.05} proved that the blow up rate in time is at least
\[
\norm{v(\cdot,t)}_{L^\infty_x} \ge \epsilon {(t_0-t)^{-1/2}}.
\]
Theorem \ref{mainthm} in particular rules out singular axisymmetric
solutions satisfying the similar bound with $\epsilon$ large.

The main idea of our proof is as follows. We shall first prove that
either  \eqref{thm-a} or \eqref{thm-b} with $\e>0$ implies the following
estimate:
\begin{equation}\label{assumption1}
|v| \le  C_* (r^2-t)^{-1/2+2\e}|t|^{-\e } r^{-2\e}.
\end{equation}
This is the content of Section 2 and 3. Note that $\e$ in
\eqref{assumption1} differs from that in \eqref{thm-b}.

If \eqref{assumption1} is satisfied for $\e=0$, the regularity of
$v$ was proved in \cite{CSTY}.  In Sections 4, 5 and 6, we extend
the proof of \cite{CSTY} to include the case \eqref{assumption1} for
$\e > 0$.  Instead of following De Giorgi and Moser's methods
\cite{MR0093649, MR0170091} used in \cite{CSTY}, we now use Nash's
idea \cite{MR855753, MR0100158} to prove the H\"older regularity
(Section 5). This simplifies some iteration arguments in
\cite{CSTY}, but we still use De Georgi-Moser's method in the local
maximum estimate in Section 4. The estimates we obtained in Sections
4 and 5 requires assumptions weaker than \eqref{assumption1}.  Very
recently Koch-Nadirashvili-Seregin-Sverak \cite{KNSS} have sent us a
manuscript that they have proved results similar to Theorem
\ref{mainthm} using a different approach based on Liouville
theorems.

\section{The case $|v| \le C |t|^{-1/2}$}

Suppose we have $ |v|\le C|t|^{-1/2}$. Our goal is to replace the
singularity of $t$ by singularity in $r$.
We will derive this estimate from the equation for the $\theta$ component of the
vorticity \eqref{q-eq}, which involves a source term $\partial_{z} v_{\theta}^{2}/r$.
Under the assumption $ |v|\le C|t|^{-1/2}$, we have $v_{\theta}^{2} \sim |t|^{-1}$,
singular in $t$ as $t \to 0$. This $t$ singularity can be weaken to $|t|^{-\e}$
after the time integration. Since the equation is scaling invariant, this improvement
in the time singularity has to be offset by the space singularity. This will be achieved
in some weak form in \eqref{eq5}.  Finally, we can transfer estimates on the vorticity to the velocity field and we thus  obtain  the estimate \eqref{assumption1}.

Recall that we always have
the bound $|r v_{\theta}| \le C $ (see Proposition 1 in
\cite{MR1902055}).
Hence for some $C_1>0$
\begin{equation}\label{vbound}
|v_{\theta}|\le C_1 \min( r^{-1}, |t|^{-1/2}), \qquad |v_{r}|+ |v_{z}|
 \le C_1 |t|^{-1/2}.
\end{equation}

For $p,q> 0$,
we will be using the notation
$$
\| v \|_{L^{q, p}_{t, x}(Q_R)}
= \| v \|_{L^{q}_{t}L^{p}_{x}(Q_R)}
= \| v \|_{L^{q}_{t}L^{p}_{x}}
=
\| v \|_{L^{q, p}_{t, x}}.
$$
These are the usual $L^{q,p}$ spaces integrated over space and time.
The domain will be suppressed in our notation below
when there is no risk of ambiguity.

We will next consider the vorticity field $\om =
\curl v$:
\begin{equation}
\om(x,t) = \om_r e_r + \om_\th e_\th + \om_z e_z,
\label{omegaTHETA}
\end{equation}
where
\begin{equation}\label{omega-formula}
\om_r = - \pd_z v_\th, \quad \om_\th = \pd_z v_r - \pd_r v_z,
\quad \om_z = (\pd_r + r^{-1})v_\th.
\end{equation}
We can deduce the following bounds for the $\theta$ component of vorticity.

\begin{lemma} \label{th2-1}
Suppose we have the pointwise bound
\begin{equation}\label{vbound2}
|v(y,s)| \le C_1 |s|^{-1/2},
\end{equation}
in $Q_R(x,t)$. 
Then for any $\de \in (0,1)$
we can estimate $\omega$ by
\begin{equation}\label{obound}
\| \omega_\theta \|_{L^{3, 4}_{t, x}} \le C R^{3/4} |t|^{-2/3}+CR^{5/12}|t|^{-1/2}
, \qquad
\| \omega_\theta \|_{L^{6, 8}_{t, x}} \le C R^{3/8}
|t|^{-5/6} + CR^{-7/24}|t|^{-1/2}.
\end{equation}
where the integration is over $Q_{\de R}(x,t)$ and the constant
$C$ depends on $C_1$ and $\de$.
\end{lemma}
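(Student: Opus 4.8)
\emph{Proof proposal.} The bounds will be extracted from the equation \eqref{q-eq} governing $\omega_\theta$, which for an axisymmetric flow has the form
\[
\pt_t\omega_\theta+(v_r\pt_r+v_z\pt_z)\omega_\theta-\Bigl(\Delta-\tfrac1{r^2}\Bigr)\omega_\theta-\tfrac{v_r}{r}\,\omega_\theta=\pt_z\!\Bigl(\tfrac{v_\theta^2}{r}\Bigr),
\]
a diffusion equation with \emph{divergence-free} transport field $(v_r,v_z)$, the favourable zeroth-order term $+r^{-2}\omega_\theta$ on the dissipation side, a lower-order term $r^{-1}v_r\omega_\theta$, and a swirl source in divergence form. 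It is in fact cleaner to work with $\Omega=\omega_\theta/r$, which satisfies the analogous equation with the five-dimensional dissipation $\Delta+\tfrac2r\pt_r$, \emph{no} $r^{-1}v_r$ term, and source $\pt_z(v_\theta^2/r^2)$, and to convert back at the very end through the elementary weighted inequality $\|\omega_\theta\|_{L^p_x(B)}\lesssim R^{1-2/p}\|\Omega\|_{L^p_x(B;\R^5)}$, valid for $B\subset B_R$ and $p\ge2$. First I record what \eqref{vbound2} and the universal bound $|rv_\theta|\le C$ give on $Q_R(x,t)$: since $t<0$ we have $|s|\ge|t|$ throughout the cylinder, hence $|v_r|+|v_z|\le C_1|t|^{-1/2}$ (a subcritical drift) and $v_\theta^2\le C_1^2\min\{r^{-2},|t|^{-1}\}$; \emph{both} of these last bounds will be used.

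Next I would localise with $\chi\in C_c^\infty(Q_R(x,t))$, $\chi\equiv1$ on $Q_{\de R}(x,t)$, $|\pt_t\chi|+|\nb\chi|^2\lesssim_\de R^{-2}$, run a Caccioppoli energy estimate for $\Omega$ tested against $\chi^2\Omega$ (the transport term dropping by incompressibility up to the cutoff commutator, the dissipation yielding $\sup_s\int\chi^2\Omega^2+\iint|\nb(\chi\Omega)|^2$), and then bootstrap through finitely many parabolic Sobolev steps ($L^\infty_tL^2_x\cap L^2_t\dot H^1_x\hookrightarrow L^{10/3}_{t,x}$) up to the target exponents. The right-hand side splits into a ``data'' piece, supported in $\spt\nb\chi\subset Q_R\setminus Q_{\de R}$, and a ``source'' piece. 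For the data piece one uses that $\omega_\theta=\pt_z v_r-\pt_r v_z$ is a first-order expression in $v$, so a negative-order norm of $\omega_\theta$ on $Q_R$ is bounded by $\|v\|_{L^2_{t,x}(Q_R)}\le C_1|t|^{-1/2}R^{5/2}$; tracking the powers of $R$ through the smoothing delivers the second terms $R^{5/12}|t|^{-1/2}$ and $R^{-7/24}|t|^{-1/2}$. For the source one estimates $v_\theta^2/r$ in the relevant mixed norm by \emph{interpolating} the two pointwise bounds, $v_\theta^2/r\le C_1^2 r^{-1-2\mu}|s|^{-1+\mu}$ with $\mu\ge0$ chosen as large as the axis permits, and then performing the time integral $\int_{|t|}^{|t|+R^2}u^{(\mu-1)a}\,du\simeq\min\{|t|^{(\mu-1)a+1},\,R^2|t|^{(\mu-1)a}\}$; after matching exponents, the first branch — dominant when $R\gtrsim|t|^{1/2}$ — produces the terms $R^{3/4}|t|^{-2/3}$ and $R^{3/8}|t|^{-5/6}$, while the second branch is absorbed into the data terms. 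The remaining lower-order contributions are carried perturbatively, their coefficients being $O(C_1|t|^{-1/2})$ on $Q_R$, so that the effective local Reynolds number $C_1R|t|^{-1/2}$ — a function of $R/|t|^{1/2}$ — is precisely the dimensionless quantity responsible for the two-term shape of the estimate.

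The principal obstacle is the behaviour at the axis $r=0$: the source $v_\theta^2/r\sim r^{-1}$ (and its square $\sim r^{-2}$) lies exactly at the threshold of integrability against the relevant weights, so it cannot simply be absorbed into the dissipation, and neither bound on $v_\theta^2$ alone suffices. One must genuinely combine them — equivalently, decompose $Q_R$ into a near-axis slab $\{r<\rho\}$ (small in measure, use $v_\theta^2\le C_1^2 r^{-2}$) and its complement (use $v_\theta^2\le C_1^2|t|^{-1}$), optimise in $\rho$, and exploit the favourable sign of the $r^{-2}\omega_\theta$ term, i.e.\ the five-dimensional structure of the $\Omega$-equation, whose $r^3$-weighted volume element just barely tames the singularity — and then balance the powers of $r$, $R$ and $|t|$ finely enough that the two stated exponent pairs emerge. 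Away from the axis (and from $t=0$) everything is routine, since the transport field is subcritical there.
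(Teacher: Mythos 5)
The paper's proof of this lemma is far shorter and rests on an entirely different mechanism than what you propose. It does not touch the vorticity equation at all here. Instead, it treats the Navier--Stokes system as a Stokes system with forcing $\pd_j(v_iv_j)$ and applies the local interior Stokes regularity estimate (Lemma A.2 of \cite{CSTY}, after rescaling):
\[
\norm{\nb v}_{L^\alpha_t L^q_x(Q_{\de R})} \le c\,\norm{v\otimes v}_{L^\alpha_t L^q_x(Q_R)} + c\, R^{-4+3/q}\norm{v}_{L^\alpha_t L^1_x(Q_R)},
\]
valid for $1<\alpha,q<\infty$. Inserting only the hypothesis $|v|\le C_1|s|^{-1/2}$ gives $\norm{v\otimes v}_{L^\alpha_t L^q_x} \le R^{3/q}|t|^{1/\alpha-1}$ and $R^{-4+3/q}\norm{v}_{L^\alpha_t L^1_x}\le R^{3/q}|t|^{1/\alpha-1}(R^{-2}|t|)^{1/2-1/\alpha}$, and the two pairs $(\alpha,q)=(3,4)$ and $(6,8)$ produce exactly \eqref{obound}. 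Since $\omega_\theta$ is a component of $\nb v$, nothing more is needed: this is a \emph{linear} regularity estimate, and it uses \emph{only} the time-singular bound on $v$.

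Your approach is not the paper's, and more importantly it has real difficulties at the level of the claimed exponents. First, a Caccioppoli estimate for $\Omega=\om_\th/r$ gives control in $L^\infty_t L^2_x\cap L^2_t\dot H^1_x$; the parabolic Sobolev embedding from that space reaches $L^{10/3}_{t,x}$, which is strictly below $L^{3,4}_{t,x}$ and very far from $L^{6,8}_{t,x}$. To get to those exponents via energy estimates one needs higher moments ($\int \chi^2 q^4$, etc.) and a genuine iteration --- which is precisely what the paper does in Lemma \ref{th2-2}, \emph{using the present Lemma \ref{th2-1} as input}. Reproving Lemma \ref{th2-1} by the same mechanism as Lemma \ref{th2-2} risks circularity and, at minimum, leaves the bootstrap unverified; you do not carry out a single step of it. Second, your argument needs the axis bound $|rv_\theta|\le C$ to tame the source $v_\th^2/r$ near $r=0$, but the lemma as stated assumes only $|v|\le C_1|s|^{-1/2}$; the paper's Stokes-regularity proof needs no axis information whatsoever. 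Third, your bound on the ``data piece'' --- asserting that a negative-order norm of $\omega_\theta$ in the annulus $\spt\nb\chi$ is controlled by $\|v\|_{L^2_{t,x}}$ --- is not a free integration by parts inside a Caccioppoli inequality, since that inequality produces $\iint|\nb\chi|^2\Omega^2$, not a pairing of $\Omega$ against a test function, and converting it requires bounds on $\nb\Omega$ that are themselves part of what you are proving. In short: your proposal substitutes a hard, unfinished nonlinear bootstrap for what the paper handles with one application of a linear Stokes estimate, and several of the intermediate steps you invoke would not go through as described.
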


\begin{proof}
We can rescale Lemma A.2 of \cite{CSTY} to get, for $\alpha ,q \in (1,\infty)$
and $c=c(\de,q,\alpha )$,
\begin{equation*} \label{eqA1}
\norm{\nb v}_{L^\alpha _tL^q_x(Q_{\delta R})} \le c
\norm{f}_{L^\alpha _tL^q_x(Q_R)} + c R^{-4+
3/q}\norm{v}_{L_t^\alpha L_x^1(Q_R)}.
\end{equation*}
Using $f=v_iv_j$ and the assumption \eqref{vbound2}, the first
integral on the right is bounded by
\[
\norm{v^2}_{L^\alpha _tL^q_x(Q_R)}\le R^{3/q}\bke{\int_{-\infty}^t
  |\tau|^{-\alpha }d\tau}^{1/\alpha }
= R^{3/q} |t|^{1/\alpha  -1}.
\]
The second  term  $R^{-4+
3/q}\norm{v}_{L_t^\alpha L_x^1(Q_R)}$  is bounded by
\[
 R^{-1+
3/q}\bke{R^2 |t|^{-\alpha /2}}^{1/\alpha } = R^{-1+ 3/q+2/\alpha } |t|^{-1/2}
=  R^{3/q} |t|^{1/\alpha  -1}(R^{-2}|t|)^{1/2-1/\alpha }.
\]
These show \eqref{obound}.
\end{proof}

The following is our key lemma.

\begin{lemma} \label{th2-2}
Suppose that the velocity $v$ satisfies the bound \eqref{vbound} and
$\norm{v}_{L^\infty_t L^1_x} \le C_1$ in $Q_1$.  There is $\de \in
(0,1)$ such that, for any small $\e>0$ there is a constant $C_2>0$ so
that $($recall $r=(x_1^2+x_2^2)^{1/2})$
\begin{equation}\label{1.1}
|v(x,t)| \le C_2  r^{-1+2 \e} |t|^{-\e} \quad \text{ in } Q_\de.
\end{equation}
\end{lemma}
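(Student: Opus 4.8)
The plan is to run a vorticity-transport argument on $\omega_\theta$, exploiting the structure of the equation \eqref{q-eq} for the $\theta$-component of vorticity. The quantity that behaves well is $\Omega := \omega_\theta / r$, which satisfies a drift-diffusion equation (with the axisymmetric Laplacian) whose only forcing term comes from $\partial_z(v_\theta^2)/r^2$-type expression; using the bound $|r v_\theta|\le C_1$ from \eqref{vbound} one controls this forcing in a weighted $L^q_tL^p_x$ space with the singularity living in $r$ rather than in $t$. First I would use Lemma \ref{th2-1} to get the $L^{3,4}$ and $L^{6,8}$ bounds on $\omega_\theta$ on parabolic subcylinders $Q_{\delta R}(x,t)$ whose size $R$ is chosen comparable to $r$ (the distance of the center to the axis); by rescaling and summing/iterating these local estimates over a Whitney-type decomposition of a neighborhood of the axis, the $t^{-2/3}$ and $t^{-5/6}$ time-singularities get traded for powers of $r$, producing after the time integration a gain of the form $|t|^{-\e}$ at the cost of $r^{-1+2\e}$ — this is the mechanism announced after \eqref{assumption1} and encoded in the heuristic "$v_\theta^2\sim|t|^{-1}$, weakened to $|t|^{-\e}$ after time integration, offset by space singularity."

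The second half is to pass from vorticity back to velocity. Here I would use the Biot–Savart law in axisymmetric form: $v_r, v_z$ are recovered from $\omega_\theta$ via a (two-dimensional, in the $(r,z)$ half-plane) elliptic system, so $\nabla v_{r,z}$ is controlled by $\omega_\theta$ in the same Lebesgue scales up to Calderón–Zygmund-type bounds plus a lower-order term handled by $\|v\|_{L^\infty_tL^1_x}\le C_1$; then Sobolev embedding in the spatial variable (using that $L^{6,8}_{t,x}$ with the stated exponents embeds into a pointwise-in-$x$, integrable-in-$t$ bound after another local maximum/Moser step) upgrades this to the pointwise estimate $|v_{r}|+|v_z|\le C_2 r^{-1+2\e}|t|^{-\e}$. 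For $v_\theta$ itself the bound $|v_\theta|\le C_1 r^{-1}\le C_1 r^{-1+2\e}|t|^{-\e}$ on $Q_\delta$ is essentially already in \eqref{vbound} once one notes $r\le C$ and $|t|\le C$ there (absorbing constants), so the real work is only for the $r,z$ components.

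The main obstacle, I expect, is the bookkeeping in the iteration/summation over scales near the axis: each local estimate from Lemma \ref{th2-1} carries an $R$-dependent constant and an $R$-power, and one must choose $R\sim r$ and sum the resulting geometric-type series in such a way that (i) the $|t|^{-1/2}$ remainder terms in \eqref{obound} do not dominate — they must be shown to be lower order relative to the $|t|^{-\e}$ target in the regime $r^2\lesssim|t|$, and the complementary regime $r^2\gtrsim |t|$ is where the $r$-singularity does the work — and (ii) the final exponent on $r$ comes out to exactly $-1+2\e$ with $\e$ as small as we like, which forces a careful optimization of the Hölder exponents $\alpha,q$ used in applying the rescaled Lemma A.2 of \cite{CSTY}. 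A secondary subtlety is that the estimate must be genuinely local (on $Q_\delta$ with some fixed $\delta\in(0,1)$), so all the covering arguments and the elliptic regularity step need to be done with cutoffs, generating commutator terms that are again absorbed using $\|v\|_{L^\infty_tL^1_x}\le C_1$ and \eqref{vbound}.
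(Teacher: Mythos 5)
Your high-level plan --- estimate $\omega_\theta$ through its transport equation \eqref{q-eq}, use $|rv_\theta|\le C$ to control the forcing $F=-v_\theta^2/r$, and recover $b=v_re_r+v_ze_z$ by Biot--Savart --- does match the paper's strategy, and your observation that $v_\theta$ already obeys \eqref{1.1} is correct. But the technical engine is not what you describe, and the missing piece is not cosmetic. The paper works with $q=\omega_\theta$ (not $\Omega=\omega_\theta/r$) and tests \eqref{q-eq} against $\chi^2 q$ and $\chi^2 q^3$, with $\chi$ a cutoff at a single scale $R\sim r_0$ supported at distance $\ge R$ from the axis. This gives the local energy inequality \eqref{eq3}, which bounds $\int|\chi q|^2(t)$ by a drift term plus $R^{-1+2\e}|t|^{-\e}$; one feeds the $L^{3,4}_{t,x}$ bound of Lemma \ref{th2-1} into the drift term, gets a crude bound of order $R^2|t|^{-3/2}$, re-inserts it through a slightly shrunk cutoff, and bootstraps finitely many times to arrive at the fixed-time estimates $\int|\chi q|^2(t)\lesssim R^{-1+2\e}|t|^{-\e}$ and (using the $L^{6,8}_{t,x}$ bound) $\int|\chi q^2|^2(t)\lesssim R^{-5+2\e}|t|^{-\e}$. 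The iteration is at a fixed scale; the gain is in the iteration index, not over a Whitney decomposition. Nowhere do you explain how to convert the space-time norms $L^{3,4}_{t,x}$, $L^{6,8}_{t,x}$ of Lemma \ref{th2-1} into \emph{pointwise-in-time} $L^2_x$ and $L^4_x$ moments of $q$ near $x_0$ --- yet this conversion is precisely what the bootstrap accomplishes, and it is indispensable for the next step.

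The Biot--Savart step likewise needs more than ``Sobolev embedding plus a Moser step.'' The paper sets $\alpha(x)=\int\bke{\nabla_y\tfrac{1}{4\pi|x-y|}}\times(Jq\,e_\theta)\,dy$, notes $b-\alpha$ is harmonic on $B_{2\de}$ (hence bounded by its $L^1$ norm, controlled via $\norm{v}_{L^\infty_tL^1_x}\le C_1$), and estimates $\alpha(x_0)$ pointwise at each fixed time by splitting the kernel into $|y-x_0|<R$ and $|y-x_0|\ge R$: the near part uses H\"older with $|x_0-y|^{-2}\in L^{4/3}(B_R)$ against the fixed-time $L^4_x$ moment \eqref{7}, and the far part uses an annular covering and the fixed-time $L^2_x$ moment \eqref{eq5} in the integrated form \eqref{eq5c}--\eqref{eq5d}. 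A Moser iteration for $b$ is not available, since $b$ solves only a div--curl system with no maximum principle, and the space-time integrability of $\omega_\theta$ supplied directly by Lemma \ref{th2-1} is too weak to give a pointwise-in-time $L^\infty_x$ bound on $b$ by Sobolev embedding alone. You genuinely need the bootstrapped fixed-time moments from Steps 1--2 of the paper's argument; as sketched, your proposal cannot be closed without supplying that mechanism.
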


\begin{proof}
{\bf Step 1}.  We first bound the second moment of $\om_\th$. Denote
$q=\om_\th$. Its equation can be written as
\begin{equation}\label{q-eq}
\left [\partial_t + b \cdot \nabla - \Delta - \frac {v_r}r \right
  ]\,q + \partial_{z} F = -\frac{q}{r^2} , \qquad F= \frac{-v_\theta^2 }{r}.
\end{equation}
See for instance \cite{CSTY}.  Above the vector $b$ is a part of $v$,
\begin{equation} \label{b-def}
b = v_r e_r + v_z e_z,
\quad
b\cdot \nabla =v_r \partial_r+v_z \partial_z. 
\end{equation}
Note that
\begin{equation} \label{b-eq}
\div b = 0,
\quad
\curl b = \om_\th e_\th.
\end{equation}
The first equation for $b$ is because $b=v-v_\th e_\th$,
$\div v = 0$ and $\div (v_\th e_\th)=r^{-1}\pd_\th v_\th=0$.
The second can be read from
\eqref{omegaTHETA}, \eqref{omega-formula} with $v_\th$ replaced by $0$.
The term $\frac{q}{r^2}$ in \eqref{q-eq} has a good sign and will drop out in our
estimates below.  For any $x_0$ fixed with $r_0 >0$, let $\xi (x)$ be
a smooth cutoff function at $x_{0}$ with radius $R=r_{0}/10$. For any
$t$, let $\chi (x, s) = \xi(x) \eta(s)$ where $\eta(t)$ is a smooth
cutoff function so that $\eta (t)=1$ and $\eta(t_0)=0$ with $t_0 = t -
R^2$.  Let $B$ be the characteristic function of the ball centered at
$x_{0}$ with radius $R$ and $\phi (x, s)=B(x) 1(t_0 \le s \le t)$.

 Multiply
\eqref{q-eq} by $\chi^{2} q$ and integrate in $\R^3 \times (t_0,t)$.
We get
\begin{equation}\label{eq2}
\begin{aligned}
\int_{\mathbb{R}^3} \tfrac 12 | \chi  q|^2(t) +
&
\int_{t_0}^t \int_{\mathbb{R}^3} |\nabla(\chi q)|^2
\\
\le &
\int_{t_0}^t \int_{\mathbb{R}^3} \left [ q^2 \bke{ b \chi \cdot
\nabla \chi  + |\nabla \chi |^2 +\frac {\chi^2 v_r}r - \chi \dot \chi }  +  \chi F \partial_{z} (\chi q)
+  \chi F q \partial_{z} \chi \right ],
\end{aligned}
\end{equation}
where $\dot \chi$ is the time derivative of $\chi$.
The last term is bounded by
\[
\int_{t_0}^t\int_{\mathbb{R}^3}
 \chi F q \partial_{z} \chi \le
\int_{t_0}^t\int_{\mathbb{R}^3}
q^2 |\nabla \chi|^2 + \chi^2 F^2 .
\]
The second term on the right hand side of equation \eqref{eq2} can be
bounded by
\[
 \int_{t_0}^t\int_{\mathbb{R}^3} \chi F \partial_{z} (\chi q)
 \le \int_{t_0}^t\int_{\mathbb{R}^3} \left [\frac  {\chi^2 F^2} 2 +
 \frac 1 2 |\nabla  (\chi q)|^2 \right ].
\]
Notice the support of $\chi$ has  a distance at least $R$ from the $z$ axis.
From the assumption on $v_\theta$, we have for any $0\le \e \le 1$
\[
|F \chi|(x,s) \le  C R^{-2+  \e} |s|^{-1/2-\e/2}  \chi(x,s).
\]
Thus we have the integral bound for $\e >0$
\[
\int_{t_0}^t ds \int |F \chi|^2 (x,s) dx \le C R^{-1+ 2 \e} |t|^{-\e}.
\]
Now we can derive the following bound from \eqref{eq2}:
\begin{equation}\label{eq2-1}
\int_{\mathbb{R}^3}  | \chi  q|^2(t)
\le
4 \int_{t_0}^t \int_{\mathbb{R}^3} \left [ q^2  \bke{ |b \chi \cdot
\nabla \chi|  + |\nabla \chi |^2 +\frac {\chi^2 |v_r|}r + |\dot \chi|} \right ]  + C R^{-1+ 2 \e} |t|^{-\e}.
\end{equation}

From the assumption \eqref{vbound}, we also have for $s<t$
\begin{equation*}
|b \chi \cdot
\nabla \chi | \le C \chi  |s|^{-1/2} R^{-1} ,     \quad
|\dot \chi|+  |\nabla \chi |^2  \le C R^{-2} \phi, \quad
 \frac {\chi^2 |v_r|}r  \le C \chi^2 |s|^{-1/2} R^{-1}.
\end{equation*}
Thus we can bound the integral on the right hand side of
\eqref{eq2-1} to get
\begin{equation}\label{eq3}
\int_{\mathbb{R}^3}  | \chi  q|^2  (t)
\le
\int_{t_0}^t  ds  \int_{\mathbb{R}^3}    [   s^{-1/2} R^{-1} +R^{-2} ]   q^2 (s) \phi
+ R^{-1+ 2 \e} |t|^{-\e}.
\end{equation}

We now assume $|t|<R^2$.  Thus $R^{-2} \lec |s|^{-1/2} R^{-1}$ in supp\!
$\phi$ and by Lemma \ref{th2-1},
\begin{equation*} 
\| q  \phi \|_{L^{3,4}_{t, x}} \le R^{3/4} |t|^{-2/3}.
\end{equation*}
This implies that
\[
\int \int_{\mathbb{R}^3} ds ~ s^{-1/2} R^{-1} q^2 (s) \phi \le R^{-1}
\|s^{-1/2} \phi \|_{L^{3, 2}_{t, x}} \|q^2 \phi \|_{L^{3/2, 2}_{t, x}}
\le R^2 |t|^{-3/2}.
\]
Therefore, from \eqref{eq3} we have
\begin{equation}\label{eq4}
\int_{\mathbb{R}^3} | \chi q|^2 (t) \le R^{2} |t|^{-3/2} .
\end{equation}

Let $\tilde \chi, \tilde B, \tilde \phi$ be the functions similar to
$\chi, B, \phi$ with $R$ replaced by $c R$ for some small constant $c$,
say $c = 1/100$. Clearly, all previous results, in particular
\eqref{eq3}, remain true if we added tildes. We also have
\[
\int_{\mathbb{R}^3} q^2(s) \tilde B \le C \int_{\mathbb{R}^3}  | \chi  q|^2  (s).
\]
We can now use this bound in  \eqref{eq3} (the tilde version) and obtain
\begin{equation}\label{eq2.10}
\int_{\mathbb{R}^3} | \chi q|^2 (t) \le R |t|^{-1} .
\end{equation}

Notice that \eqref{eq2.10} is a better estimate than \eqref{eq4}. We
can repeat this procedure in finite steps to show that,
under the assumption $|t|<R^2$,
\begin{equation}\label{eq5}
\int_{\mathbb{R}^3}  | \chi  q|^2  (t)
\le    R^{-1+ 2 \e} t^{-\e}.
\end{equation}

Assume now $|t|>R^2$. Thus $ |s|^{-1/2} R^{-1} \lec R^{-2} $ in supp\!
$\phi$ and by Lemma \ref{th2-1},
\begin{equation*} 
\| q  \phi \|_{L^{3,4}_{t, x}} \le R^{5/12} |t|^{-1/2}.
\end{equation*}
We have
\[
\int \int ds R^{-2} q^2 \phi \le R^{-2} \|
\phi \|_{L^{3, 2}_{t, x}} \|q^2 \phi \|_{L^{3/2, 2}_{t, x}}
\le R |t|^{-1}.
\]
Thus
\[
\int |\chi q|^2(t)  \le R t^{-1} + R^{-1+ 2 \e} t^{-\e}
\le  R^{-1+ 2 \e} |t|^{-\e},
\]
which is \eqref{eq5}.
\medskip

{\bf Step 2.}
We now bound the fourth moment of $q$.
Similar to the derivation of \eqref{eq2}, we now have
\begin{equation}\label{eq1}
\begin{aligned}
\int_{\mathbb{R}^3}  | \chi  q^{2}|^2 (t) +
&
 \int_{t_0}^t \int_{\mathbb{R}^3} |\nabla(\chi q^{2})|^2
\\
\le &
\int_{t_0}^t\int_{\mathbb{R}^3} q^4 \bke{ b \chi \cdot
\nabla \chi  + |\nabla \chi |^2 + |\Delta \chi|
+\frac {\chi^2 v_r}r + |\dot \chi| }  +  |\chi q F \partial_{z} (\chi q^{2})|
+ |q^{3} F \chi \partial_{z} \chi| .
\end{aligned}
\end{equation}
From the Schwarz inequality, we have
\[
\chi q F \partial_{z} (\chi q^{2})\le \frac 1 2 \chi^{2} q^{2}F^{2}  + \frac 1 2 |\nabla (\chi q^{2})|^{2}.
\]
\[
 q^{3} F \chi \partial_{z} \chi  \le R^{-2}\chi q^{4}  +\chi R^{2}  F^{4}.
\]
From \eqref{vbound}, we have
\[
\iint \chi R^{2} F^{4} \le \iint \chi R^{-8+2\e} s^{-1-\e} \le
R^{-5+2\e} |t|^{-\e}.
\]
From the bound on $\int \chi^2 q^2$ in \eqref{eq5}
\[
\iint \chi^{2} q^{2}F^{2} \lec R^{-6}\int_{t-R^2}^t R^{-1+2\e} t^{-\e}ds
\le R^{-5+2\e} |t|^{-\e}.
\]
Therefore, we have
\begin{equation}\label{5}
\int_{\mathbb{R}^3}  | \chi  q^{2}|^2  (t)
\le
\int  \int_{\mathbb{R}^3} [   s^{-1/2} R^{-1} +R^{-2} ]    q^4  \phi
+  R^{-5+2 \e} |t|^{-\e}.
\end{equation}

We now assume $|t|<R^2$. Using the bound on $\| \omega_\theta
\|_{L^{6, 8}_{t, x}} \le CR^{3/8}t^{-5/6}$ in \eqref{obound}, we have
\begin{equation}\label{6}
\int_{\mathbb{R}^3}  | \chi  q^{2}|^2  (t)
\le
 R^2 |t|^{-7/2}
+  R^{-5+2 \e} |t|^{-\e}.
\end{equation}
Now plug \eqref{6} into \eqref{5}, we obtain a better result. Repeat this
procedure as in Step 1 until we get
\begin{equation}\label{7}
\int_{\mathbb{R}^3}  | \chi  q^{2}|^2  (t)
\le   R^{-5+2 \e} |t|^{-\e},
\end{equation}
under the assumption $|t|<R^2$.  For the other case $|t|>R^2$, Using
H\"older and the bound $\| \omega_\theta \|_{L^{6, 8}_{t, x}} \le
CR^{-7/24}t^{-1/2}$ from \eqref{obound} to estimate \eqref{5}, we get
\[
\int_{\mathbb{R}^3}  | \chi  q^{2}|^2  (t)
\le
 R^{-1} |t|^{-2}
+  R^{-5+2 \e} |t|^{-\e}\le R^{-5+2 \e} |t|^{-\e},
\]
in one step.

\medskip

{\bf Step 3}. We now prove the pointwise bound \eqref{1.1} for $v$.
Since we have already good estimates for $v_\th$, it suffices to estimate $b$,
which satisfies \eqref{b-def}, \eqref{b-eq} with $\omega_\th=q$.
Let $\de>0$ be a small number so that \eqref{eq5} and \eqref{7} are
valid for $(x_0,t) \in Q_{8 \de}$.  Let $J(x)$ be a smooth cut-off
function for the ball of radius $4\de$,
with $J(x)=1$ for $|x|\le 2\de$.
Define
\[
\al(x)= \int \frac 1{4\pi|x-y|} \curl (Jq e_\th)(y)\,dy
=\int \bke{\nabla_y \frac 1{4\pi|x-y|}} \times (Jq e_\th)(y)\,dy
.
\]
By the vector identity
\begin{equation}\label{vector-id}
-\Delta b = \curl \curl b - \nb \div b,
\end{equation}
the difference $b - \al$ is harmonic in the ball of radius $2\de$ and
hence
\[
\norm{b-\al}_{L^\infty (B_{\de})} \le
\norm{b-\al}_{L^1 (B_{2\de})} \le
\|\alpha \|_{L^1 (B_{2\de})} + \| b\|_{L^1 (B_{2\de})}.
\]
The last term is bounded by order one since $v$ is in $L^\infty_t
L^1_x$. We now estimate $\al$.

For  $x_0\in B_{2\de}$ let $R=\tilde c r_0$ with $\tilde c$
sufficiently small and $B(y)=1(|y-x_0|<R)$. Omitting the
$t$-dependence,
\begin{equation}
|\al |(x_{0}) \le
\int_{\mathbb{R}^3}
 \frac { |Jq({y})| } {|{x_{0}}-{y}|^2} d{y}
 \le \int_{\mathbb{R}^3}  \frac { |q({y})| } {|{x_{0}}-{y}|^2}  B(y) d{y}
+ \int_{\mathbb{R}^3} \frac { |Jq({y})| } {|{x_{0}}-{y}|^2} (1-B(y)) d{y}.
\end{equation}
From the H\"older inequality and \eqref{7}, the first term on the right hand
side is bounded by
\[
\int_{\mathbb{R}^3}  \frac { |q({y})| } {|{x_{0}}-{y}|^2}  B(y) d{y}
\le R^{-1+\e/2} |t|^{-\e/4}.
\]

From the H\"older inequality and \eqref{eq5}, we have the following variation of
\eqref{eq5} for $|x|<8\de$
\begin{equation}\label{eq5c}
 r_{x}^{-3}\int  | 1 (|x-y|\le r_{x}/200)  q (y,t)|  dy
\le    r_{x}^{-2 + \e} |t|^{-\e/2}.
\end{equation}
Multiply by
\[
|x_{0}-x|^{-2}\cdot 1(|x_0-x| \ge R/40)\cdot 1(|x|<8\de),
\]
and integrate over $x$ to have
\[
 \iint   r_{x}^{-3}\frac {1 (|x-y|\le r_{x}/200)  |q (y)|  1(|x_0-x| \ge R/40)} {|x_{0}-x|^{2}}1(|x|<8\de) dx dy,
\]
\[
\le   \int dx  \frac {r_{x}^{-2 + \e} t^{-\e/2} 1(|x_0-x| \ge R/40)}  { |x_{0}-x|^{2}}
\le C R^{-1 + \e} |t|^{-\e/2}.
\]
The left hand side is bounded below by
\[
 \int   r_{x}^{-3}\frac {1 (|x-y|\le r_{x}/200)  |q (y)|  1(|x_0-x| \ge R/40)} {|x_{0}-x|^{2}}1(|x|<8\de) dx dy
 \]
\[
 \ge C \int  dx dy r_{y}^{-3} 1 (|x-y|\le r_{y}/400) |Jq|(y)  \frac { (1-B(y))} {|x_{0}-y|^{2}}
 \ge C \int   dy  |Jq|(y)  \frac { (1-B(y))} {|x_{0}-y|^{2}}.
\]
Above for the first inequality we have used that $y$ is in a small
neighborhood of $x$ for the integrand to be nonzero,
in particular $r_x \sim r_y$ and $|x_0-x|\sim |x_0-y|$.
We have thus proved that
\begin{equation}\label{eq5d}
 \int  |x_{0}-y|^{-2} (1-B (y))  | Jq (y)|  dy
 \le   C R^{-1 + \e} |t|^{-\e/2}.
\end{equation}

Since all $\e>0$ in the proofs are arbitrarily small, this
proves the same bound for $|\al(x_0,t)|$.  It follows that
$\norm{\al(t)}_{L^1_x(B_{2\de})} \le \int_{B_{2\de}} r_x^{-1+\e}
t^{-\e/2}dx \le C t^{-\e/2}$ and we get the pointwise bound for
$b$ in $Q_\de$.
\end{proof}

\section{The case $|v| \le C r^{-1+\e} |t|^{-\e /2}$}

In this section, we prove the estimate \eqref{assumption1} from the assumption
$|v| \le C r^{-1+\e} |t|^{-\e /2}$. Our main idea is the following Theorem \ref{th3-1}
which states that the space singularity can be replaced by the time singularity.

\begin{theorem} \label{th3-1}
Suppose for some $\e\in (0,1/2)$ we have
\begin{equation}\label{*}
|v(x, -t)| \le  C r^{-1+\e} t^{-\e /2},\quad (x,-t)\in Q_1.
\end{equation}
Then for any $\de\in (0,1)$ and  $0 <\alpha < 1/2$, there is a constant
$C$ such that
\begin{equation}\label{2.1}
|v(x, -t) | \le C r^{-2\alpha} t^{-1/2+\alpha}, \quad (x,-t)\in Q_{\de}.
\end{equation}
\end{theorem}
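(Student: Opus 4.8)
Both \eqref{*} and \eqref{2.1} are invariant under the Navier--Stokes scaling \eqref{rescale}, so Theorem \ref{th3-1} is a self‑improvement within the family of bounds $|v|\le C\,r^{-1+\gamma}|t|^{-\gamma/2}$: it says that validity for the exponent $\gamma=\e$ forces validity for every exponent $\gamma=1-2\alpha\in(0,1)$. The plan is to prove it by running the weighted‑energy scheme of Lemma \ref{th2-2} on parabolic cylinders anchored a positive distance from the $z$‑axis, and then bootstrapping on $\gamma$.

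Fix $(x_0,-t)\in Q_\de$, let $r_0$ be the distance from $x_0$ to the $z$‑axis, and set $R=c\min\{r_0,t^{1/2}\}$ with $c$ a small absolute constant, so that $Q_R(x_0,-t)\subset Q_1$ and $r\sim r_0$ there. A direct comparison using the constraint between $r_0$ and $t^{1/2}$ shows that \eqref{*} already gives \eqref{2.1} on one of the two regions $\{r_0^2\le t\}$, $\{r_0^2>t\}$ — which one depending on the sign of $\alpha-\tfrac{1-\e}{2}$ — so the estimate need only be produced on the complementary region. On $Q_R(x_0,-t)$ the hypothesis \eqref{*} together with $|rv_\theta|\le C$ (Proposition 1 of \cite{MR1902055}) gives $|v|\le C r_0^{-1+\e}|s|^{-\e/2}$, $|v_\theta|\le C\min\{r_0^{-1},r_0^{-1+\e}|s|^{-\e/2}\}$, hence $|F\chi|=|v_\theta^2\chi/r|\le C r_0^{-3+2\nu}|s|^{-\nu}\chi$ for any $\nu\in[0,\e]$; after freezing $r\sim r_0$ these are of exactly the shape used in Section 2, with $|t|^{-1/2}$ replaced by a power of $r_0$ times a weaker power of $|s|$.

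I would then reproduce the proof of Lemma \ref{th2-2} with only cosmetic changes. Rescaling Lemma A.2 of \cite{CSTY} with $f=v_iv_j$ and inserting \eqref{*} gives the analogue of Lemma \ref{th2-1}, i.e.\ $L^{3,4}_{t,x}$ and $L^{6,8}_{t,x}$ bounds for $\om_\theta$ on $Q_{\de R}(x_0,-t)$ carrying explicit powers of $r_0$ and $t$. Then Steps 1 and 2: multiply \eqref{q-eq} by $\chi^2 q$, resp.\ $\chi^2 q^3$, with $\chi$ a space--time cutoff for $Q_R(x_0,-t)$, integrate by parts, discard the favorably‑signed $q/r^2$ term, absorb the $F$‑terms by Schwarz, and bound the coefficients $|b\chi\cdot\nabla\chi|$, $|\nabla\chi|^2$, $\chi^2|v_r|/r$, $|\dot\chi|$ and the source $\iint\chi^2F^2$, resp.\ $\iint\chi^2q^2F^2+\iint\chi R^2F^4$, using the localized bounds above and the previous step. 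The decisive point is that after the $s$‑integration over an interval of length $\sim R^2$ the source carries a strictly better power of $r_0$ than a naive estimate of $q$ itself would, since $|s|^{-2\nu}$ is integrable in $s$; the internal bootstrap (shrink $R$, use $\int q^2\tilde B\le C\int|\chi q|^2$, reinsert, separating $t\le R^2$ from $t>R^2$) turns this into moment bounds $\int|\chi q|^2(-t)\le C r_0^{a}t^{b}$, $\int|\chi q^2|^2(-t)\le C r_0^{a'}t^{b'}$. Via $q=\curl b$ and the Biot--Savart step of Lemma \ref{th2-2} — introduce $\al$, the Newtonian potential of $\curl(Jqe_\theta)$; by \eqref{vector-id} $b-\al$ is harmonic on a small ball so $\|b\|_{L^\infty}$ there is controlled by $\|b\|_{L^1_x}\le C$ and $\|\al\|_{L^1_x}$, and $\|\al\|_{L^1_x}$ is estimated by splitting $Jq$ into its part in $B_R(x_0)$ (via the fourth moment and H\"older) and its far part (via the second moment and the duplication argument used for \eqref{eq5d}) — one obtains $|v|\le C r^{-1+2\nu}|t|^{-\nu}$, i.e.\ \eqref{*} with $\e$ replaced by $2\nu\le 2\e$. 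Applying this gain finitely many times, with $\nu$ chosen along the way, reaches every exponent $1-2\alpha\in(0,1)$, which together with the region where \eqref{*} already suffices gives \eqref{2.1}.

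The step I expect to be the main obstacle is the exponent bookkeeping. Unlike in Section 2, where the drift terms and the source carried only a time singularity, here they also carry negative powers of $r_0$, so at each stage one must check that the dimensionless combination of $r_0$ and $t$ improves in the right direction, that the regimes $t\le R^2$ and $t>R^2$ glue together, and that the near/far splitting in the Biot--Savart estimate reproduces exactly the scale‑invariant power $r_0^{-1+2\nu}$ — which is why both the second and fourth moments of $\om_\theta$ are needed and not just one.
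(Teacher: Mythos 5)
Your plan takes a genuinely different route to the key ``gain'' step. The paper proves Lemma \ref{th3-2} not by the vorticity energy scheme of Section 2 but by writing the equation as a Stokes system with force $\pd_j(-v_iv_j)$, splitting $v=u+\tilde v$ where $\tilde v$ is the heat potential of the Leray-projected, cut-off force, and bounding $\tilde v$ pointwise by splitting the force into $\{r\ge R\}$ and $\{r\le R\}$ pieces and using $A_p$-weighted estimates for the Riesz transforms (Lemma \ref{th3-3}); the remainder $u$ is handled by local Stokes regularity and Sobolev. Your substitute — reproducing Steps 1--3 of Lemma \ref{th2-2} on cylinders anchored at $r_0$, with the drift and source now carrying powers of $r_0$ as well as $|s|$ — is plausible in spirit, and you correctly identify the bookkeeping as the load-bearing part, but you leave it entirely undone.

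More importantly, there is a concrete gap in your iteration logic. Your gain lemma (like the paper's) produces at best $|v|\lesssim r^{-1+2\e}|t|^{-\e}$, i.e.\ raises the exponent $\beta$ in $|v|\lesssim r^{-1+\beta}|t|^{-\beta/2}$ from $\e$ to $2\e$; iterating and interpolating therefore only yields $\beta\in[\e,1)$, equivalently $\alpha=(1-\beta)/2\in(0,(1-\e)/2]$. Your claim that ``applying this gain finitely many times \dots reaches every exponent $1-2\alpha\in(0,1)$'' is false as stated: the iteration never produces $\beta<\e$, i.e.\ never reaches $\alpha\in((1-\e)/2,1/2)$. Your observation about the two regions $\{r_0^2\le t\}$, $\{r_0^2>t\}$ is correct but does not supply the missing exponents — it just tells you where the hypothesis alone suffices. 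The paper closes this gap with a second mechanism: once the iteration gives \eqref{2.1} for $\alpha$ small (equivalently $|v|\lesssim r^{-2\alpha}|t|^{-1/2+\alpha}$, a small perturbation of \eqref{thm-a}), it re-runs the Section 2 argument under this perturbed hypothesis to recover Lemma \ref{th2-2}'s conclusion, hence \eqref{*} with \emph{arbitrarily small} $\e'$, and then iterates once more. You need an analogous step; without it, even a fully bookkept version of your argument proves only the $\alpha<(1-\e)/2$ portion of Theorem \ref{th3-1}.
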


\begin{proof} We shall need the following Lemma
which exchanges the space singularity with
the time singularity by replacing $\e$ with $2  \e$.  The idea of the proof is to view the Navier-Stokes equation
as a linear equation with a source term $ v \cdot \nabla v$. Since this term is in
the form $v^{2}$, we naturally increase the time singularity to $|t|^{-\e}$. The spatial singularity
will come out correctly  due to the scaling invariance of the Navier-Stokes equation. This can be seen easily if we pretend that the kernel of the linear Stokes
equation is a heat kernel. The general case only involves a minor technicality to deal with the divergence free condition.


\begin{lemma} \label{th3-2}
Suppose \eqref{*} holds in $Q_1$.  Then for any $\tau \in (0,1)$ there
is a constant $C$ such that
\begin{equation}\label{**}
|v(x, -t)| \le  C r^{-1+2\e} t^{-\e }, \quad (x,-t)\in Q_{\tau}.
\end{equation}
\end{lemma}

Note that \eqref{*}, \eqref{2.1} and \eqref{**} are all invariant
under the natural scaling of \eqref{nse}. Assuming this Lemma, we now finish the proof of
Theorem \ref{th3-1}.

Suppose that \eqref{*} holds for some $\e\in (0,1/2)$. Then from Lemma \ref{th3-2},
we can increase $\e$ by a factor of two. In fact,  \eqref{*} and \eqref{**}  implies that
\begin{equation}\label{***}
|v(x, -t)| \le  C r^{-1+\beta} t^{-\beta /2},\quad (x,-t)\in Q_{\tau},
\end{equation}
for all $\e \le \beta \le 2 \e$. Iterating this procedure, we obtained that
\eqref{2.1} holds for  $ 0< \alpha  < (1-\e)/2$.  It remains to show that
$(1-\e)/2$ can be replaced with $1/2$.

We have shown that \eqref{2.1} holds for small $\alpha$.
Notice that for small $\alpha$ condition \eqref{2.1} is very close to the assumption \eqref{thm-a} (which is the case $\alpha=0$).
One can easily check that all arguments in Section 2
remain valid if the assumption \eqref{thm-a} is replaced by \eqref{2.1} if $ \alpha$ is sufficiently
 small. Then the conclusion of Lemma \ref{th2-2} holds in this case.
 So that we are able to conclude that
\eqref{*} holds  for arbitrarily small $\e$. Iterating this procedure  proves that \eqref{2.1} holds for  $ 0< \alpha < 1/2$.
\end{proof}

\bigskip

To prove the lemma, we write the Navier-Stokes equations \eqref{nse}
as a Stokes system with force
\[
\pd_t v_i - \Delta v_i + \pd_i p = \pd_j f_{ij}, \quad f_{ij} =- v_i
v_j.
\]
Recall key steps in \cite{CSTY}: $v =u + \tilde v$ where $\tilde v$ is
defined as follows: Let $P$ be the Helmholtz projection in $\R^3$,
i.e., $(Pg)_i = g_i - R_i R_k g_k$. Let $\zeta(x,t)\in C^\infty
(\R^4)$, $\zeta \ge 0$, $\zeta = 1$ on $Q_{\frac {1+ \tau}2 }$ and $\zeta=0$
on $\R^3 \times (-\infty,0] - Q_1$. Notice that we cutoff at order
one.  For a fixed $i$, define
\[
\tilde v_i(x,t) = \int_{-1}^t \Gamma(x-y,t-s) \, \partial_j (
F_{ij})(y,s)\,dy\,ds,
\]
where $\Gamma$ is the heat kernel and $ F_{ij} = f_{ij} \zeta - R_i
R_k (f_{kj}\zeta)$.

With this choice of $\tilde v$, $u$ satisfies the homogeneous Stokes
system in $Q_{\frac {1+ \tau}2}$ and the following bounds:
\begin{equation}\label{eqA4}
\norm{\nb u}_{L^s_t L^q_x(Q_{\tau})} \le c\norm{u}_{L^s_tL^1_x(Q_1)}
\le c \norm{v}_{L^s_t L^1_x(Q_1)} + c\norm{\td v}_{L^s_t L^1_x(Q_1)},
\end{equation}
provided that $1<s, q <\infty$.  One can check that the proof in
\cite{CSTY} gives \eqref{eqA4} for $s=\infty$. The requirement $s<\infty$
is for the estimates of $\tilde v$.

\begin{lemma}\label{th3-3}
Under the assumption
\eqref{*}, we have
\begin{equation}\label{eq3.5}
| \tilde v(x_0, -t)| \le C r_0^{-1+2\e} t^{-\e },\quad (x_0,t) \in
\R^3 \times (0,1).
\end{equation}
\end{lemma}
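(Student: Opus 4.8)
The plan is to turn the heuristic in the paragraph preceding Lemma~\ref{th3-2} into a genuine estimate: treat $\tilde v$ as the heat potential of $\pd_j F_{ij}$ with the Stokes kernel playing the role of a heat-type kernel. Since $F_{\cdot j}=P(\zeta f_{\cdot j})$ with $f_{ij}=-v_iv_j$ and $P$ the Helmholtz projection, and since $P$ commutes with the heat semigroup $e^{\sigma\Delta}$ (convolution with $\Ga(\cdot,\sigma)$) and with $\pd_j$, the definition of $\tilde v$ rewrites as
\[
\tilde v_i(x_0,-t)=\sum_{j,k}\int_{-1}^{-t}\!\!\int_{\R^3}(\pd_j\mathcal O_{ik})(x_0-y,\,-t-s)\,(\zeta f_{kj})(y,s)\,\ind y\,\ind s,
\]
where $\mathcal O_{ij}(z,\sigma)$ is the Oseen tensor, i.e.\ the kernel of $e^{\sigma\Delta}P$ (the fundamental solution of the non-stationary Stokes system). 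The ``minor technicality'' of the divergence-free constraint is then entirely encoded in the classical pointwise bound $|\pd_j\mathcal O_{ik}(z,\sigma)|\lec(|z|+\sqrt\sigma)^{-4}$, which has exactly the parabolic homogeneity of $|\nb\Ga|$; the loss of Gaussian decay relative to $\nb\Ga$ is irrelevant because $\zeta f$ is supported in $Q_1$, so only $|x_0-y|\lec 1$ ever enters. Since $\int_{\R^3}(|z|+\sqrt\sigma)^{-4}\ind z\lec\sigma^{-1/2}$, the integral converges (no obstruction as $s\to-t$, since $r_0>0$ keeps the weight bounded there), and invoking \eqref{*} in the form $|\zeta f(y,s)|\lec r_y^{-2+2\e}|s|^{-\e}$ on $Q_1$, it suffices to prove
\[
I:=\int_{-1}^{-t}\!\!\int_{Q_1}\bke{|x_0-y|+\sqrt{-t-s}}^{-4}r_y^{-2+2\e}|s|^{-\e}\,\ind y\,\ind s\ \lec\ r_0^{-1+2\e}|t|^{-\e}.
\]

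I would estimate $I$ by a parabolic dyadic decomposition about the point $(x_0,-t)$; we may assume $r_0<1$, the case $r_0\ge 1$ being trivial (then $|x_0-y|\gtrsim 1$ on $\spt\zeta$ and the kernel is $\lec r_0^{-4}$). Writing $\sigma=-t-s$, group $(y,s)$ by the parabolic scale $\rho:=|x_0-y|+\sqrt\sigma\in\{2^{-m}\}$, $\rho\lec 1$. On such a piece the kernel is $\lec\rho^{-4}$; the $s$-integral contributes $\int_{-t-\rho^2}^{-t}|s|^{-\e}\ind s\sim\rho^2\bke{\max(t,\rho^2)}^{-\e}$; and the spatial integral satisfies $\int_{|x_0-y|\lec\rho}r_y^{-2+2\e}\ind y\lec\min\bke{\rho^{3}r_0^{-2+2\e},\ \rho^{1+2\e}}$, the first bound being the relevant one while $\rho\lec r_0$ (so $r_y\sim r_0$) and the second once $\rho\gtrsim r_0$ (so the piece reaches the symmetry axis $r=0$). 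Multiplying, the scale-$\rho$ contribution is $\lec\rho\,r_0^{-2+2\e}\bke{\max(t,\rho^2)}^{-\e}$ for $\rho\lec r_0$ and $\lec\rho^{-1+2\e}\bke{\max(t,\rho^2)}^{-\e}$ for $\rho\gtrsim r_0$; splitting further by $\rho\lessgtr\sqrt t$ yields four geometric series in $m$, each summable since $0<\e<1/2$ makes the exponents $1$, $1-2\e$, $-1+2\e$, $-1$ definite in sign. Summing them and comparing with the claim in the two regimes $t\lessgtr r_0^2$ gives $I\lec r_0^{-1+2\e}|t|^{-\e}$, which is \eqref{eq3.5}.

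The genuine work, and the step I expect to be the main obstacle, is the bookkeeping in this dyadic sum: one must keep track of three thresholds at once --- $\rho$ versus $r_0$, $\rho$ versus $\sqrt t$, and the scale at which the parabolic shell meets the axis --- and then check that every geometric series reproduces \emph{precisely} the exponents $-1+2\e$ in $r_0$ and $-\e$ in $t$. Several sub-cases only produce the weaker intermediate bounds $r_0^{-1}$ or $|t|^{-1/2}$, which are subsumed into the target solely because on the relevant range $t\lec r_0^2$, resp.\ $\sqrt t\gtrsim r_0$. By contrast the passage from the Stokes kernel to a heat-type kernel, i.e.\ dealing with the Riesz transforms in $F_{ij}$, is --- once the classical decay bound for $\pd\mathcal O$ is in hand --- the ``minor technicality'' the authors allude to; should one prefer to avoid $\mathcal O$, one may keep $\nb\Ga$ and split $R_iR_k(\zeta f_{kj})$ into a part supported in $w$ away from $x_0$, pointwise controlled exactly like $\zeta f$, and a part near $x_0$, where $\zeta f$ is bounded hence in every $L^q_x$ and one closes by H\"older against $\nb\Ga$ with $q>3$.
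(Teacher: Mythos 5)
Your proof is correct and takes a genuinely different route from the paper's. The paper keeps $\pd_j\Gamma$ explicit, bounds it pointwise by $\si^{-1/2}g(\cdot,\si)$ with $g$ the Gaussian profile, and handles the singular-integral piece $Kh=R_iR_k(\zeta f_{kj})$ by moving the self-adjoint $K$ onto $g$; the resulting integrals $\int(Kg)h$ are estimated by H\"older against power weights $|x|^a$, invoking their Muckenhoupt $A_p$ property to control $K$, with exponents ($q$, respectively $a,b,m$) chosen near the edges of the admissible range after splitting $h$ across $r\gtrless R$. Your version instead folds $\pd_j\Gamma$ and the Helmholtz projection into $\pd_j\mathcal O$ and invokes the classical pointwise decay $|\pd\mathcal O(z,\si)|\lec(|z|+\sqrt\si)^{-4}$, which reduces the whole lemma to an explicit dyadic sum in which the two thresholds $\rho\sim r_0$ and $\rho\sim\sqrt t$ appear cleanly; the loss of Gaussian tails is, as you note, harmless because $\spt\zeta\subset Q_1$. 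The trade is that you import a standard (but not completely elementary) bound for the Oseen kernel, while the paper imports the equally standard weighted boundedness of Riesz transforms; your bookkeeping replaces their exponent-threshold chase, and to my eye is the more transparent of the two. One small fix: ``$r_0\ge 1$ is trivial'' should read ``$|x_0|\gtrsim 1$ is trivial''---the $z$-coordinate of $x_0$ can be large while $r_0$ is small, but then $|x_0-y|\gtrsim|x_0|$ on $\spt\zeta\subset B_1$, so the kernel is $\lec|x_0|^{-4}\lec1\lec r_0^{-1+2\e}t^{-\e}$ and the claim is immediate. Your dyadic sums are otherwise correct: for $t\le r_0^2$ they give $t^{1/2-\e}r_0^{-2+2\e}$ and two copies of $r_0^{-1}$, each $\lec r_0^{-1+2\e}t^{-\e}$ since $\sqrt t\le r_0$; for $t\ge r_0^2$ they give two copies of $r_0^{-1+2\e}t^{-\e}$ and one of $t^{-1/2}$, the last $\lec r_0^{-1+2\e}t^{-\e}$ since $r_0\le\sqrt t$.
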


\begin{proof} Denote $R= r_0 $. Notice that assumption
\eqref{*} implies \eqref{**} when $R \ge \sqrt t$. Hence we may
assume that $R \le \sqrt t$.
Let $h=f_{ij} \zeta$ and $K= R_i R_j$. Denote
\begin{align*}
 \xi_1 (x,-t)& = \int_{-1}^{-t}\int |s|^{-2} \exp \left [ - \frac
{|x-\tilde x |^2} {4(-t-s)} \right ] \, h (\tilde x,s)\,d\tilde x\,ds
\\
&= \int^{1-t}_{0}\int s^{-2} \exp \left [ - \frac {|x-\tilde x |^2} {4s}
\right ] \, h (\tilde x,-t-s)\,d\tilde x\,ds,
\end{align*}
\[
 \xi_2 (x,-t) = \int^{1-t}_{0} \int s^{-2} \exp \left [ - \frac
{|x-\tilde x |^2} {4s} \right ] \, K h (\tilde x,-t-s)\,d\tilde x\,ds.
\]
We can bound $\tilde v(x,-t)$ pointwisely by $\sum_{i,j,k} |\xi_k(x,-t)|$.
Thus it suffices to show $\sum |\xi_k(x_0,t)|\le C R^{-1+2\e} t^{-\e }$.
We shall only bound $\xi_2$ since the bound for
$\xi_1$ is identical. For $x_0$ fixed, let
\[
g (x, s) = \exp \left [   - \frac {|x_0-x |^2} {4s} \right ] s^{-3/2}.
\]
Since $K$ is symmetric, we have
\[
 \xi_2 (x_0,-t) =
\int^{1-t}_{0}   s^{-1/2}  \int d x \, (Kg)( x, s)  \,  h (x,-t-s)\,ds.
\]

Let $h_1(x, a)= 1(r \ge R) h(x, a)$ and $h_2(x, a)= 1(r \le R)h(x,
a)$. Then we have
\[
\int \,d x        \,   (K g)(x, s) h_1(x, -t-s)
\le   \left [ \int \,d x (K g)^p (x, s) |x-x_0|^p \right ]^{1/p}
\left [ \int \,d x h_1^q(x, -t-s) |x-x_0|^{-q} \right ]^{1/q}.
\]
Recall $|x|^a$ is an $A_p$ weight in $\R^n$  provided that
\[
-n< a < n (p -1).
\]
Thus for
\[
0\le p < 3 p -3,
\]
we have
\[
\left [ \int \,d x (K g)^p (x, s) |x-x_0|^p \right ]^{1/p}
\le \left [ \int \,d x  g^p (x, s) |x-x_0|^p \right ]^{1/p}
\le |s|^{-1+3/(2p)} = |s|^{-3/(2q) + 1/2}.
\]
Since $h_1$ is supported in $r \ge R$, we have for
\[
 (3-2 \e)  q > 3, \quad q>1,
\]
the following inequality:
\[
\left [ \int \,d x h_1^q(x, -t-s) |x-x_0|^{-q} \right ]^{1/q}
\le t^{-\e} \left [ \int_{r \ge R} \,d x r^{(-2+ 2 \e)q }  |x-x_0|^{-q} \right ]^{1/q}
\le t^{-\e} R^{-3+ 2 \e + 3/q},
\]
where we have used \eqref{*} in the first inequality.
Thus  we have
\[
 s^{-1/2} \int \,d x        \,   (K g)(x, s) h_1(x, -t-s)
\le  t^{-\e} R^{-3+ 2 \e} \left ( \frac {R^2} s \right )^{3/(2q)}.
\]
Therefore, we have
\[
\int^{R^2}_{0}   s^{-1/2}  \int d x \, (Kg)( x, s)  \,  h_1 (x,-t-s)\,ds
\le C t^{-\e}  R^{-3+ 2 \e}  \int^{R^2}_{0} ds   \left ( \frac {R^2} s \right )^{3/(2q)}
\le C t^{-\e}    R^{-(1- 2 \e) },
\]
provided that
\begin{equation}\label{q1a}
3/(2q) < 1, \quad  (3-2 \e)  q > 3, \quad q>1, \quad 0\le p < 3 p -3.
\end{equation}
For any  $0< \e < 1/2$ fixed, we can solve the last condition  by
\begin{equation}\label{q1a-sol}
q= (3/2)^+.
\end{equation}
Similarly,
\[
\int_{R^2}^{1}   s^{-1/2}  \int d x \, (Kg)( x, s)  \,  h_1 (x,-t-s)\,ds
\le C t^{-\e}  R^{-3+ 2 \e}  \int_{R^2}^{1}  ds \left ( \frac {R^2} s \right )^{3/(2q)}
\le C t^{-\e}    R^{-(1- 2 \e) },
\]
provided that
\begin{equation}\label{q1b}
3/(2q) > 1, \quad  (3-2 \e)  q > 3, \quad q>1, \quad 0\le p < 3 p -3.
\end{equation}
For any  $0< \e < 1/2$ fixed, we can solve the last condition  by
\begin{equation}\label{q1b-sol}
q= (3/2)^-.
\end{equation}

\bigskip
For any $m> 0$ and  $a, b$ dual we have
\[
 \int \,d x        \,   (K g)(x, s) h_2(x, s)
\le  \left [ \int \,d x (K g)^a (x, s) |x-x_0|^{ma} \right ]^{1/a}
\left [ \int \,d x h_2^b(x, -t-s) |x-x_0|^{-mb} \right ]^{1/b}.
\]
If
\[
m a < 3 a -3,
\]
then $|x|^{ma}$ is an $A_a$ weight. Thus
we have
\[
\left [ \int \,d x (K g)^a (x, s) |x-x_0|^{ma} \right ]^{1/a}
\le \left [ \int \,d x  g^a (x, s) |x-x_0|^{ma} \right ]^{1/a}
\le s^{-3/(2b) + m/2}.
\]
We can estimate the last integral by
\[
\int \,d x h_2^b(x, -t-s) |x-x_0|^{-mb}
\le t^{-\e b} \int \,d x r^{-(2-2\e) b} 1(r \le R) |x-x_0|^{-mb}
\]
\[
=
t^{-\e b}   R^{-b(2+m- 2 \e) + 3} \int \,d x r^{-(2-2\e) b} 1(r \le 1)|x-(x_0/R)|^{-mb}
\le C t^{-\e b}   R^{-b(2+m- 2 \e) + 3},
\]
where the equality is due to scaling and we have assumed that
\[
(1-\e) b < 1, \quad 1< mb <3.
\]
Therefore, we have
\[
\int^{R^2}_{0}   s^{-1/2}  \int d x \, (Kg)( x, s)  \,  h_2 (x,-t-s)\,ds
\le C t^{-\e}    R^{-(3- 2 \e) } \int^{R^2}_{0} ds  \left ( \frac  {R^2} s \right )^{3/(2b) -m/2+1/2}
\le C t^{-\e}    R^{-(1- 2 \e) },
\]
provided that
\begin{equation}\label{a1a}
b > \frac 3 {m+1} , \quad (1-\e) b < 1, \quad 1< mb <3 , \quad m a < 3 a -3.
\end{equation}
Since $a$ and $b$ are dual, the last condition is equivalent to $m < 3/b$, which is part of the third condition.
It is easy to check that the following condition implies \eqref{a1a}
\begin{equation}\label{a2b}
1/2 < m,  \quad  \frac 3 {m+1} < b < \frac 3 m , \quad b < \frac 1  {1-\e}.
\end{equation}
For any  $0< \e < 1/2$ fixed, we can solve the last condition  by
\begin{equation}\label{a1a-sol}
b= \frac 1  {1-\e} - \mu, \quad m = 3-  3 \e,
\end{equation}
for $\mu$ small enough.

Similarly, we have
\[
\int_{R^2}^{1}   s^{-1/2}  \int d x \, (Kg)( x, s)  \,  h_2 (x,-t-s)\,ds
\le C t^{-\e}    R^{-(3- 2 \e) } \int_{R^2}^{1} ds  \left ( \frac  {R^2} s \right )^{3/(2b) -m/2+1/2}
\le C t^{-\e}    R^{-(1- 2 \e) },
\]
\begin{equation}\label{a1b}
b < \frac 3 {m+1} , \quad (1-\e) b < 1, \quad 1< mb <3 , \quad m a < 3 a -3.
\end{equation}
It is easy to check that the following condition implies \eqref{a1b}
\begin{equation}
 \frac 1 m  < b < \frac 3 {m+1} , \quad b < \frac 1  {1-\e}.
\end{equation}
This equation has a solution provided that there is an $m$ solving  the equation
\[
1 - \e < m < 2-3 \e.
\]
This is clearly so for any $0< \e < 1/2$ fixed.
We have thus  proved the Lemma.
\end{proof}

\bigskip

We now conclude the proof of Lemma \ref{th3-2}. Let $\ga= \frac {1-
\tau}4$ and denote $Q_1^t = B_1(0)\times (-1,-t)$.  For any
$(x_0,t)\in Q_\tau$, $Q_{2\ga}(x_0,t) \subset Q_{ \frac {1+\tau}2}
\cap Q_1^t$ and hence $u$ satisfies the homogeneous Stokes system in
$Q_{2\ga}(x_0,t)$. By \eqref{eqA4}, assumption \eqref{*} and Lemma
\ref{th3-3}, we have
\begin{equation*}
\norm{\nb u}_{L^\infty_t L^4_x(Q_\ga(x_0,t))}
\le c\norm{u}_{L^\infty_tL^1_x(Q_{2\ga}(x_0,t))}
\le c \norm{v}_{L^\infty_t L^1_x(Q_1^t)} + c\norm{\td v}_{L^\infty_t L^1_x(Q_1^t)}
\le  c t^{-\e/2} + c t^{-\e} \le c t^{-\e}.
\end{equation*}
By the Sobolev inequality,
\begin{equation*}
\norm{u}_{L^\infty_t L^\infty_x(Q_\ga(x_0,t))}
\le c\norm{\nb u}_{L^\infty_t L^4_x(Q_\ga(x_0,t))}
+ c\norm{u}_{ L^\infty_t L^1_x(Q_{2\ga}(x_0,t))} \le  c t^{-\e}.
\end{equation*}
Together with \eqref{eq3.5}, we have thus proved Lemma \ref{th3-2}.
\hfill $\square$

\section{Local Maximum Estimate}

In sections 2 and 3 we have proved the bound \eqref{assumption1} under
both assumptions \eqref{thm-a} and \eqref{thm-b} with $\e>0$.  Our
goal in the remaining sections 4, 5, and 6 is to show that the proof
in the paper \cite{CSTY} can be extended in this case.  This section
proves local maximum estimates assuming \eqref{assumption1}.  These
estimates will be used to obtain H\"older continuity of $rv_\theta$ in
section 5 and to bound $\Omega=\bar \om_\th /r$ of the limit solution
in section 6.

Suppose $u$ is the smooth function satisfying
\begin{equation}\label{u2}
\partial_t u - L^* u = 0, \qquad
L=  \Delta + {\frac 2r} \pd_r -b \cdot \nb ,
\end{equation}
We now derive parabolic De Giorgi type energy estimates for this equation under the assumption
\begin{equation}\label{assumption}
|b| \le   C_* r^{-1+2\e}|t|^{-\e }.
\end{equation}
Above $C_*>0$ is an absolute constant which is allowed to be large, above
$\epsilon>0$ is sufficiently small.

Consider a  test function $0\le \zeta_1(x,t)\le 1$ defined on $Q_1$ for which $\zeta_1=0$ on
$\pd B_1 \times [- 1^2,0]$ and $\zeta_1 =1$ on $Q_{\sigma }$ for $0<\sigma<1$.  Suppose that $\zeta_1(x, -1)=0$.
Now consider the rescaled test function $\zeta(x,t)=\zeta_1(x/R,  t/R^2)$
on $Q_R$.
Define $(u)_\pm = \max\{ \pm u, 0\}$ for a scalar function $u$.
Multiply \eqref{u2} by $p (u-k)_\pm^{p-1} \zeta^2$ for  $1<p \le 2$ and $k\ge 0$ to obtain
\begin{gather*}
\left.\int_{B_R} \zeta^2 (u-k)_\pm^p\right|^t_{- R^2}
+
\frac {4(p-1)}p \int_{-R^2} ^t dt' \int_{B_R} dx |\nabla
((u-k)_\pm^{p/2}\zeta)|^2
\\
= 2\int_{-R^2} ^t dt' \int_{B_R} dx   (u-k)_\pm^p \left( \zeta
\frac{\partial \zeta}{\partial t}+|\nabla \zeta|^2 + \frac{2-p}p
\zeta\Delta \zeta -2\zeta \frac{\partial_r \zeta}{r} +\bar{b}
\cdot \zeta \nabla \zeta\right)
\\
-
 2\left.\int_{-R^2} ^t dt'\int_{B_R} dz ~ \zeta^2 (u-k)_\pm^p\right|_{r=0}.
\end{gather*}
Notice that the last term is negative.

Let $v_\pm\equiv (u-k)_\pm^{p/2}$.
To estimate the term involving $b$ we use  Young's inequality
$$
\int_{\mathbb{R}^3} v_\pm^2 b  \zeta\cdot \nabla \zeta
\le \delta \frac{R^{-1+\alpha}}{1+\alpha } \int_{\mathbb{R}^3} v_\pm^2 \zeta^2 \left |b \left (\frac t {R^2} \right )^\e \right |^{1+\alpha} +
C_\delta\frac{\alpha  R^{-2+ (1+\alpha)/\alpha}}{1+\alpha }
 \int_{\mathbb{R}^3} v_\pm^2  \zeta^2  \left [   \left ( \frac {R^{2} } t \right )^\e \frac { |\nabla \zeta|} { \zeta} \right ]^{ (1 + \alpha)/\alpha}.
$$
This holds  for small $\delta>0$ and $\alpha >0$ to be chosen.
Further choose $\zeta$ to decay like  $( 1-|x|/R)^n $  near the boundary of $B_R$.
If $n$ is large enough
(depending on $\alpha$)  we have
$$
C_\delta\frac{\alpha  R^{-2+ (1+\alpha)/\alpha}}{1+\alpha }
 \int_{\mathbb{R}^3} v_\pm^2  \zeta^2  \left [ \left ( \frac {R^{2} } t \right )^\e  \frac {|\nabla \zeta|} \zeta \right ]^{ (1 + \alpha)/\alpha}
 \le
 C
R^{-2} \left ( \frac {R^{2} } t \right )^{ \e (1 + \alpha)/\alpha}  \int_{B_R}  v_\pm^2.
$$
We also use the H{\"o}lder and Sobolev inequalities to obtain
\begin{equation*}
\begin{split}
\delta \frac{R^{-1+\alpha}}{1+\alpha } \int_{\mathbb{R}^3} v_\pm^2 \zeta^2
\left |b \left (\frac t {R^2} \right )^\e \right |^{1+\alpha}
&\le
\delta\left(R^{(-1+\alpha)3/2}  \int_{B_R}  \left |b \left (\frac t {R^2} \right )^\e \right |^{(1+\alpha)3/2}\right)^{2/3} \int_{\mathbb{R}^3} |\nabla (v_\pm\zeta ) |^2
\\
&\le
\delta C\int_{\mathbb{R}^3} |\nabla (v_\pm\zeta ) |^2.
\end{split}
\end{equation*}
For  $b$ satisfies \eqref{assumption}, there is an
$\alpha$ small enough so that the last inequality holds.  We conclude that
\begin{equation}
\label{b-term}
\begin{split}
\int_{\mathbb{R}^3} v_\pm^2 b  \zeta\cdot \nabla \zeta
&\le
\delta C\int_{\mathbb{R}^3} |\nabla (v_\pm\zeta ) |^2
+
 C
R^{-2}   \int_{B_R}  \left ( \frac {R^{2} } t \right )^{ \e (1 + \alpha)/\alpha} v_\pm^2.
\end{split}
\end{equation}

We have $\partial_r \zeta /r = \partial_\rho \zeta
 /\rho$ since $\zeta$ is radial;  so that
the singularity $1/\rho$ is effectively $1/R$.  We thus have
\begin{equation}\label{deGiorgiPm}
\begin{aligned}
\sup_{- \sigma^2R^2<t<0} \int_{B_{\sigma R}\times\{t\}}  |( u -k)_\pm|^p  &+
\int_{Q_{\sigma R}} |\nabla  ( u -k)_\pm^{p/2}  |^2
\\
&\le
\frac {C_*}{(1-\sigma)^2 R^2}  \int_{Q_{R}} \left ( \frac {R^{2} } t \right )^{ \e (1 + \alpha)/\alpha}  |( u -k)_\pm|^p.
\end{aligned}
\end{equation}
Our goal will be to establish $L^p$ to $L^\infty$ bounds for functions in this energy class.

The estimates in this section will be proven for a general function $u=\Omega$ satisfying \eqref{deGiorgiPm}:

\begin{lemma}\label{weakH}  Suppose $u$ satisfies \eqref{deGiorgiPm} for $1<p\le 2$ with $\epsilon>0$ sufficiently small.  Then for $0<R\le 1$ we have the estimate
$$
\sup_{Q_{R/2}} u_{\pm} \le C(p, C_*)
\left(R^{-3-2}\int_{Q_{R}}|u_\pm|^p \right)^{1/p}.
$$
\end{lemma}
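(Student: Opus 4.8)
The plan is to run the classical De Giorgi truncation iteration for functions in the energy class \eqref{deGiorgiPm}, and then to invoke a standard interpolation that lowers the exponent on the right side from the one produced by the parabolic Sobolev embedding back to the desired $p$. The only feature that is not boilerplate is the time‑singular weight $(R^2/t)^{\e(1+\alpha)/\alpha}$ in front of the right side of \eqref{deGiorgiPm}: verifying that it does not spoil the iteration is precisely where the hypothesis that $\e$ be sufficiently small enters, and this is the step I expect to demand the most care. First I would reduce to $R=1$ by parabolic scaling, since if $u$ satisfies \eqref{deGiorgiPm} on $Q_R$ then $u_R(x,t):=u(Rx,R^2t)$ satisfies it on $Q_1$ with the same constants, and the $R=1$ statement for $u_R$ is the general one for $u$ once the scaling is undone (this produces the factor $R^{-5}$). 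Because $\alpha$ may be fixed once and for all, ``$\e$ sufficiently small'' lets me assume $\gamma:=\e(1+\alpha)/\alpha<2/5$. I would treat $u_+$ only, the bound for $u_-$ being identical for the lower truncations in \eqref{deGiorgiPm}; and I would record that, $u$ being smooth, $\Phi(\rho):=\sup_{Q_\rho}u_+$ is finite for each $\rho\le1$, a fact needed only in the last absorption step.

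The key preliminary is to upgrade \eqref{deGiorgiPm}. For $\tfrac12\le\rho'<\rho\le1$ and $k\ge0$ I would keep the spatial cutoff $\zeta$ (equal to $1$ on $Q_{\rho'}$ and vanishing on $\pd B_\rho$) inside the Dirichlet term of \eqref{deGiorgiPm} and feed $w=(u-k)_+^{p/2}\zeta$ into the parabolic Gagliardo--Nirenberg--Sobolev inequality $\iint w^{10/3}\le C(\sup_t\int w^2)^{2/3}\iint|\nabla w|^2$. Since $\sup_t\int w^2+\iint|\nabla w|^2$ is bounded by the right side of \eqref{deGiorgiPm} and $w=(u-k)_+^{p/2}$ on $Q_{\rho'}$, this yields, with $q:=\tfrac{5p}{3}>p$,
\[
\Big(\iint_{Q_{\rho'}}(u-k)_+^{q}\Big)^{p/q}\le\frac{C(C_*)}{(\rho-\rho')^2}\iint_{Q_\rho}|t|^{-\gamma}(u-k)_+^{p}.
\]

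Next I would iterate in the truncation level. Set $\rho_j=\rho'+(\rho-\rho')2^{-j}$, $k_j=M(1-2^{-j})$ for an $M>0$ to be chosen, and $Y_j:=\iint_{Q_{\rho_j}}(u-k_j)_+^{q}$. Applying the displayed inequality with $(\rho',\rho,k)$ replaced by $(\rho_{j+1},\rho_j,k_{j+1})$, I would split its right side by H\"older into $(u-k_{j+1})_+^{p}\in L^{q/p}$, the weight $|t|^{-\gamma}\in L^{r_2}(Q_1)$ with $5/2<r_2<1/\gamma$ (possible exactly because $\gamma<2/5$), and the indicator of $\{u>k_{j+1}\}$ in the remaining exponent $r_3$, where $\tfrac pq+\tfrac1{r_2}+\tfrac1{r_3}=1$. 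Using that $|t|^{-\gamma}$ has finite $L^{r_2}$ norm on $Q_1$, that $(u-k_{j+1})_+\le(u-k_j)_+$, and Chebyshev's bound $|\{u>k_{j+1}\}\cap Q_{\rho_j}|\le(M2^{-j-1})^{-q}Y_j$, one obtains a recursion of the shape $Y_{j+1}\le C(\rho-\rho')^{-2q/p}\,b^{j}\,M^{-q\kappa}Y_j^{1+\kappa}$ with $b>1$ and $\kappa>0$ depending only on $p$. The standard fast‑iteration lemma then forces $Y_j\to0$ as soon as $M\ge C(p,C_*)(\rho-\rho')^{-\beta_1}\big(\iint_{Q_\rho}u_+^{q}\big)^{1/q}$, so $u_+\le M$ a.e.\ on $Q_{\rho'}$; that is,
\[
\sup_{Q_{\rho'}}u_+\le\frac{C(p,C_*)}{(\rho-\rho')^{\beta_1}}\Big(\iint_{Q_\rho}u_+^{q}\Big)^{1/q},\qquad\tfrac12\le\rho'<\rho\le1.
\]

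Finally I would lower the exponent from $q=\tfrac53 p$ back to $p$. Interpolating $\iint_{Q_\rho}u_+^{q}\le\Phi(\rho)^{q-p}\iint_{Q_\rho}u_+^{p}$ in the last display and then applying Young's inequality converts it into $\Phi(\rho')\le\tfrac12\Phi(\rho)+C(p,C_*)(\rho-\rho')^{-\beta_2}\big(\iint_{Q_1}u_+^{p}\big)^{1/p}$ with $\beta_2=\beta_1 q/p$. Since $\Phi$ is finite and bounded on $[\tfrac12,1]$, the usual iteration lemma---with the radii chosen so that the coefficient $\tfrac12$ overcomes the blow‑up of $(\rho-\rho')^{-\beta_2}$---absorbs the $\Phi(\rho)$ term and delivers $\sup_{Q_{1/2}}u_+\le C(p,C_*)\big(\iint_{Q_1}u_+^{p}\big)^{1/p}$; undoing the scaling of the first step produces the stated inequality, and the $u_-$ bound is obtained the same way. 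To reiterate, the single genuinely non‑routine point is the interplay of the singular weight with the iteration, which the smallness of $\e$ keeps under control; all else is the textbook De Giorgi--Nash--Moser local boundedness scheme.
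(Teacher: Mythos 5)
Your argument is correct, and at bottom it is the same De Giorgi truncation scheme that the paper uses: in both, the only genuinely delicate point is that the energy class \eqref{deGiorgiPm} carries the singular weight $(R^2/t)^{\e(1+\alpha)/\alpha}$, which must be isolated by H\"older and integrated, and this is exactly where the smallness of $\e$ is used. The organization differs, though. You first run the truncation iteration from $L^{q}$ with $q=5p/3$ (the exponent the parabolic Sobolev inequality naturally hands you) up to $L^\infty$, and only afterwards lower the exponent to $p$ via the interpolation $\int u_+^{q}\le \Phi^{q-p}\int u_+^{p}$, Young's inequality, and a radius-absorption iteration. The paper's proof avoids this second stage: it introduces an auxiliary parameter $\gamma$ with $1<\gamma<\min(p,1+\tfrac 2n)$, applies the Sobolev inequality to $v_\pm^{1/\gamma}$, and invokes \eqref{deGiorgiPm} at the lower level $p/\gamma$, so that the measure factor $A_{N,\pm}$ appears with a positive power $(2+n(1-\gamma))/(n+2)$ and the recursion closes directly at the $L^p$ scale with no interpolation step. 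In exchange, the paper's admissibility condition on $\e$ reads $\e(1+\alpha)\beta/\alpha<1$ with $\beta=\gamma/(\gamma-1)$, so its ``$\e$ small'' degenerates as $p\downarrow 1$ (since then $\gamma\downarrow 1$ and $\beta\uparrow\infty$), whereas your route only needs $\e(1+\alpha)/\alpha<2/5$, uniformly in $p$. Both are standard variants of the local-boundedness machinery and both yield the stated estimate; your two-stage version is arguably a touch cleaner in how ``$\e$ sufficiently small'' depends on $p$, at the cost of the extra absorption lemma (for which you correctly note the finiteness of $\Phi$ is needed).
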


\begin{proof}
For $K>0$ to be determined and $N$ a positive integer we define
\begin{gather*}
k_N
=
k_N^\pm=(1\mp 2^{-N})K,
~R_N=(1+2^{-N})R/2,
~\rho_N=\frac{R}{2^{N+3}},
\\
R_{N+1}<\bar{R}_N=(R_N+R_{N+1})/2 <R_N.
\end{gather*}
Notice that
$$
R_N-\bar{R}_N
=(R_N-R_{N+1})/2
=(2^{-N}-2^{-N-1})R/4
=\rho_N.
$$
Define $Q_N=Q(R_N)$ and $\bar{Q}_N=Q(\bar{R}_N)\subset Q_N$.
Choose a smooth test function $\zeta_N$ satisfying $\zeta_N\equiv 1$ on $\bar{Q}_N$, $\zeta\equiv 0$ outside $Q_N$ and vanishing on it's spatial boundary, $0\le \zeta_N\le 1$ and
$\left| \nabla \zeta_N\right|\le \rho_N^{-1}$ in $Q_N$.
Further let
$$
A^\pm(N)=\{X\in Q_N: \pm(u-k_{N+1})(X)>0\}.
$$
And $A_{N,\pm}=\left|A^\pm(N) \right|$.
Let $v_\pm = \zeta_N (u-k_{N+1})_\pm^{p/2}$.

Let $\gamma $ be a positive constant (to be chosen) such that $\gamma-1>0$ is very small. H{\"o}lder's inequality yields
\begin{gather*}
\begin{split}
\int_{Q_{N+1}} ~ |(u-k_{N+1})_\pm|^p
&\le
\int_{\bar{Q}_{N}} ~ |v_\pm|^2
\\
&\le
\left(\int_{\bar{Q}_{N}} ~ |v_\pm|^{2(n+2)/(n\gamma)}\right)^{n \gamma /(n+2)}
A_{N,\pm}^{(2 + n (1-\gamma) )/(n+2)}.
\end{split}
\end{gather*}
We will use the following  Sobolev inequality which holds for functions
vanishing on $\partial B_R$:
\begin{equation*}
\int_{Q_R} |u|^{2(n+2)/n}
\le
C(n) \left( \sup_{-R^2<t<0} \int_{B_R\times\{t\}} |u|^2 \right)^{2/n}
\int_{Q_R}  |\nabla u|^2.
\end{equation*}
See \cite[Theorem 6.11, p.112]{MR1465184}.  Above and below $n$ is the spatial dimension, so that $n=3$.

Since $v_\pm$ vanishes on the spatial boundary of $Q_N$ we have
\begin{gather*}
\left(\int_{\bar{Q}_{N}} ~ |v_\pm|^{2(n+2)/(n\gamma)}\right)^{n \gamma /(n+2)}
\le
\left(\int_{Q_{N}} ~ |v_\pm|^{2(n+2)/(n\gamma)}\right)^{n \gamma /(n+2)}
\\
\le
C \left [\left( \sup_{-R_N^2<t<0} \int_{B(R_N)\times\{t\}} |v_\pm|^{2/\gamma} \right)^{2/(n+2)}
\left(\int_{Q_{N}}  |\nabla (v_\pm)^{1/\gamma}|^{2}\right)^{n /(n+2)} \right ]^\gamma.
\end{gather*}
We use Young's inequality to bound this above by
\begin{gather*}
\le
C  \left [ \left( \sup_{-R_N^2<t<0}\int_{B(R_N)\times\{t\}}|v_\pm|^{2/\gamma}
+\int_{Q_N}|\nabla (v_\pm)^{1/\gamma}|^{2}
\right) \right ]^\gamma.
\end{gather*}
From \eqref{deGiorgiPm} the above is bounded as
\begin{gather*}
\le
C  \left [ \left( \sup_{-R_N^2<t<0}\int_{B(R_N)\times\{t\}}|(u-k_{N+1})_\pm|^{p/\gamma}
+\int_{Q_N}|\nabla (u-k_{N+1})_\pm^{p/(2\gamma)}|^{2}
\right)
+
\frac{C}{\rho_N^2}\int_{Q_N}|(u-k_{N+1})_\pm|^{p/\gamma} \right ]^\gamma
\\
\le
\left [ \frac{C}{\rho_N^2}\int_{Q_N} \left [ 1+  \left ( \frac {R^{2} } t \right )^{ \e (1 + \alpha)/\alpha}  \right ] |(u-k_{N+1}) _\pm|^{p/\gamma}\right ]^\gamma
\\
\le
\left \{
\frac{C}{\rho_N^2}\int_{Q_N}|(u-k_{N}) _\pm|^{p}
\right \}
\left \{
\frac{C}{\rho_N^2}\int_{Q_N}  \left [ 1+  \left ( \frac {R^{2} } t \right )^{ \e (1 + \alpha)/\alpha}  \right ]^\beta
\right \}^{\gamma/\beta}.
\end{gather*}
Above $\beta$ is the dual exponent of $\gamma$, i.e.
$
\frac 1 \gamma + \frac 1 \beta = 1.
$
For the upper bound above to be finite we require that
 $\e < \alpha/[\beta(1+\alpha)]$. Since $\beta \ge 1$ we have
$$
\int_{Q_N}  \left [ 1+  \left ( \frac {R^{2} } t \right )^{ \e (1 + \alpha)/\alpha}  \right ]^\beta
\le
C\int_{Q_N}  \left [ 1+  \left ( \frac {R^{2} } t \right )^{ \e (1 + \alpha)\beta/\alpha}  \right ]
\le
CR_N^5.
$$
From here our next upper bound is
\begin{gather*}
\le
C\left(\frac{R_N^5}{\rho_N^2}\right)^{\gamma/\beta}
\left \{
\frac{1}{\rho_N^2}\int_{Q_N}|(u-k_{N}) _\pm|^{p}
\right \}
\le
C\left(16 R^3 2^{2N}\right)^{\gamma/\beta}
\left \{
\frac{1}{\rho_N^2}\int_{Q_N}|(u-k_{N}) _\pm|^{p}
\right \}.
\end{gather*}
Further assume $K^p \ge R^{-n-2}\int_{Q(R)} | u_\pm|^p$.  Now define
$$
Y_N \equiv K^{-p} R^{-n-2}\int_{Q_N} | (u-k_{N})_\pm|^p.
$$
Since $k_N^\pm$ are increasing for $+$ or decreasing for $-$ and $Q_N$ are decreasing, $Y_N$ is decreasing.

Chebyshev's inequality tells us that
\begin{gather*}
A_{N,\pm} =\left|\{ Q_N: \pm(u-k^{\pm}_{N+1})>0  \}\right|
=\left|\{ Q_N: \pm(u-k^{\pm}_{N})>\pm (k^{\pm}_{N+1}- k^{\pm}_N)  \}\right|
\\
=\left|\{ Q_N: \pm(u-k_{N})> K/2^{N+1}  \}\right|
\le  2^{p(N+1)} R^{n+2}Y_N.
\end{gather*}
Putting all of this together yields
\begin{gather*}
\int_{Q_{N+1}} ~ |(u-k_{N+1})_\pm|^p
\le   \left(\int_{\bar{Q}_{N}} ~ |v_\pm|^{2(n+2)/(n\gamma)}\right)^{n \gamma /(n+2)}
A_{N,\pm}^{(2 + n (1-\gamma) )/(n+2)}
\\
\le
C\left(\frac{1}{\rho_N^2}\int_{Q_N}|(u-k_{N}) _\pm|^{p}\right) \
R^{3\gamma/\beta} 2^{ 2N \gamma/\beta}
\left(2^{p(N+1)} R^{n+2}Y_N\right)^{(2 + n (1-\gamma) )/(n+2)}
\\
\le
C\left( \frac{1}{\rho_N^2} K^pR^{n+2} Y_N\right)
R^{3\gamma/\beta} 2^{ 2N \gamma/\beta}
\left(2^{p(N+1)} R^{n+2}Y_N\right)^{(2 + n (1-\gamma) )/(n+2)}
\\
\le
C(C_* n) K^pR^{n+2} 2^{ q N}
Y_N^{1+(2 + n (1-\gamma) )/(n+2)}.
\end{gather*}
We have just used $R\le 1$.
Also the exponent is given by
\[
q= \frac {[2 + n (1-\gamma) ]p } {n+2} + \frac  {2\gamma}\beta + 2.
\]
We have thus shown
\begin{equation}\label{Y}
Y_{N+1}
\le C(C_*, n)  2^{ q N} Y_N^{1+(2 + n (1-\gamma) )/(n+2)}.
\end{equation}
We now choose $\gamma > 1$ such that the exponent of $Y_N$ is larger than one:
$
2 + n (1-\gamma) > 0.
$

One can check that if $\kappa$ is large enough, then the following identity will be preserved by \eqref{Y}:
\[
Y_N \le 2^{- \kappa N}.
\]
We are still free to choose $K$ large enough such that the following initial condition holds:
$
Y_1 \le  2^{-\kappa}.
$
\end{proof}

\bigskip

\section{H\"older Continuity}

In this section we prove H\"older continuity of the function
$\Gamma=rv_\theta$ at $t=0$ under the assumption
\eqref{assumption1}. Earlier than $t=0$,  the function
$\Gamma$ is smooth.   Additionally $\Gamma$ satisfies
\begin{equation}\label{Leq}
\frac{d \Gamma}{dt} -L \Gamma= 0, \qquad
 L  =
\Delta
-\frac{2}{r}\frac{\partial }{\partial r}
 - b\cdot \nabla.
\end{equation}
Notice that $\Gamma(r=0, t) = 0$  for all $ -1 \le t < 0$. One can check, using this condition, that both \eqref{deGiorgiPm} and Lemma \ref{weakH} hold.  Together with \eqref{assumption} we then have
\[
\sup_{-1 \le t < 0} \| \Gamma(t) \|_{L^\infty(B_R)} \le C<\infty.
\]
Our argument makes use Nash's fundamental idea for a lower bound (Lemma \ref{NashlowL}).  We consider this interesting in particular because the lower bound is obtained for a {\it solution} directly rather than the usual lower bound for a {\it fundamental solution}.

\bigskip

\subsection{Preliminary Bounds}

Let $X=(x,t)$.
Define the modified parabolic cylinder at the origin
$$
Q(R,\tau)=\{X: |x|<R,  - \tau R^2<t< 0\}.
$$
Here $R>0$ and $\tau \in (0,1]$.
We sometimes for brevity write
$Q_R=Q(R)=Q(R,1)$.
Let
\[
m_2 \equiv \inf_{Q(2R)} \Gamma,
\quad M_2 \equiv \sup_{Q(2R)} \Gamma,
\quad M \equiv M_2 - m_2>0.
\]
Notice that $m_2 \le 0 \le M_2$ since $\Gamma |_{r=0}=0$.

Define
\begin{equation}\label{udef}
u
\equiv
\left\{
\begin{aligned}
2(\Gamma - m_2)/M & \quad \text{if}\quad -m_2 > M_2,
\\
2(M_2 - \Gamma)/M & \quad \text{else.}
\end{aligned}
\right.
\end{equation}
In either case
\begin{equation}\label{ua}
0 \le u(x,t) \le 2, \qquad
a \equiv u|_{r=0}\ge 1,
\end{equation}
and $u$ solves  the equation \eqref{Leq}.

Since  now $u$ is nonnegative, we can make $u$ positive by adding arbitrary small constant
to $u$. This part of argument is standard and from now on we assume that $u >0$.

\subsection{Lower bound on $\|u\|_p$}

Our goal in this section is to prove that there is a lower bound on $u$ with a more general assumption than that was used in our previous paper\cite{CSTY}.  The bound that we prove in this section will serve as an input for Nash's argument as we shall describe it later on. Actually, we only need
a lower bound on $\|u\|_p$ for some $0<p<1$.

Consider the following probability  measure on $Q_R$
\begin{equation}\label{omega}
d\omega = R^{-2}dt R^{-3} dx.
\end{equation}
Define the norm
$$
\| f\|_{L_{t,x}^{q,p}(\omega)}
:=
\left(\int_{-R^2}^0 \frac{dt}{R^2}~ \left( \int_{B_R} \frac{dx}{R^3} ~|f|^p \right)^{q/p} \right)^{1/q}.
$$
We will sometimes write $L^{p}(\omega) = L_{t,x}^{p,p}(\omega)$.
Our main result in this section is the following

\begin{lemma}  Suppose $u$ solves \eqref{Leq} and satisfies condition \eqref{ua}.
Assume that for some small $\beta > 1$ we have
\begin{equation}\label{bassumption}
\|b \|_{L^{3/2, \beta}_{t,x}(\omega)} \le C R^{-1}.
\end{equation}
Then for arbitrary $p\in (0,1)$ the following holds:
\begin{equation}\label{lb}
a\le  C  \|u \|^{p/\alpha}_{L^p(\omega)}.
\end{equation}
Above $\alpha$ is the dual exponent to $\beta$.
\end{lemma}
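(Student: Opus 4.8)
The plan is to run Nash's lower-bound argument adapted to the drift-diffusion operator $L$ in \eqref{Leq}. The key quantity is $G(t) = \int_{B_R} (\log u)(x,t)\,d\mu_t$ for a suitable smooth, compactly supported, time-independent weight $\mu$ on $B_R$ (say a fixed bump equal to $1$ on $B_{R/2}$, supported in $B_R$, with $|\nabla \mu| \lesssim R^{-1}$). First I would test the equation $\partial_t u = Lu$ against $\mu^2/u$ and integrate over $B_R$. Because $\log$ is concave, the diffusion term produces a good term: $-\int \mu^2 u^{-2}|\nabla u|^2 + (\text{lower order})$; written in terms of $w = \log u$, one gets
$$
\frac{d}{dt}\int \mu^2 w \;\ge\; c\int \mu^2 |\nabla w|^2 \;-\; C\int |\nabla \mu|^2 \;-\; (\text{drift and } \tfrac2r\partial_r \text{ contributions}).
$$
The $\tfrac2r\partial_r$ term has a favorable sign on nonnegative $u$ after the usual manipulation (as noted in the paper, $\partial_r\mu/r = \partial_\rho\mu/\rho$ is effectively $1/R$), and at worst contributes a harmless $CR^{-2}$ after Young. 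The drift term $\int \mu^2 w^{-1}$... more precisely $-\int \mu^2 u^{-1} b\cdot\nabla u = -\int \mu^2 b\cdot\nabla w$ is the one genuinely new piece: I would bound it by $\delta\int \mu^2|\nabla w|^2 + C_\delta \int \mu^2 |b|^2$ via Young, absorb the first term into the good term, and control $\int_{Q_R}|b|^2$ using the hypothesis \eqref{bassumption}: by Hölder in space and time against the probability measure $\omega$, $\int_{-R^2}^0\!\!\int_{B_R}|b|^2 \lesssim R^3\|b\|_{L^{3/2,\beta}_{t,x}(\omega)}^2 \cdot R^{2/\alpha'}$ for appropriate dual exponents, which is finite precisely because $\beta>1$; the net bound is $O(R)$, consistent with scaling. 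Altogether, after integrating in $t$ over $(-R^2,0)$, one obtains
$$
\int_{B_{R/2}} w(x,0)\,dx \;\ge\; -C R^3 + c\int_{Q(R/2)} |\nabla w|^2 \, dx\,dt,
$$
up to normalization by $R$'s, i.e. $G(0) \ge -C$ in the rescaled (probability-measure) normalization, and moreover the Dirichlet energy of $w$ over a parabolic subcylinder is controlled.

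Next I would convert the pointwise information $a = u|_{r=0} \ge 1$ into a lower bound on the time-averaged spatial mean of $w$. Since $u$ is continuous up to $r=0$ and equals $a\ge1$ there, and $u$ solves a nondegenerate parabolic equation, a barrier/continuity argument (or a weak Harnack input — but we are trying to avoid Harnack, so a barrier is cleaner) shows that for each $t$ near $0$, $u \ge c > 0$ on a ball of fixed fraction of $B_R$ about the axis; hence $\int_{B_R} w(x,t)\,dx \gtrsim -C$ uniformly, giving $G(t)\ge -C$ for all $t\in(-R^2,0)$, not just $t=0$. Actually, the cleaner route: combine the energy inequality for $w$ (the good $\int|\nabla w|^2$ term) with a Poincaré inequality to get $\|w - \langle w\rangle\|_{L^2(\omega)} \lesssim \|\nabla w\|_{L^2(\omega)} \lesssim 1 + (\langle w\rangle + C)$, pinning $\langle w\rangle$ from below by $-C$. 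The key structural point is Jensen: $\langle w \rangle = \langle \log u\rangle \le \log\langle u\rangle$, so a lower bound on $\langle \log u\rangle$ does not immediately give a lower bound on $\langle u\rangle$; instead one uses it the other way. The trick (Nash) is that controlling $\log u$ from below in mean, together with the energy bound, forces $u$ itself not to be too small on a large set: for any $p\in(0,1)$,
$$
\int_{Q_R} u^p\,d\omega \;\ge\; \left(\int_{Q_R} u^{-(1-p)} \cdot u \,d\omega\right)\Big/\cdots
$$
— more directly, by Jensen applied to the convex function $x\mapsto e^{-px}$ on the probability space $\omega$ (with $x = -\log u$... sign care needed), one gets $\|u\|_{L^p(\omega)}^p = \int e^{p w}\,d\omega \ge e^{p\langle w\rangle} \ge e^{-Cp}$, a constant lower bound independent of the smallness of $u$ pointwise. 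So $\|u\|_{L^p(\omega)} \ge c(p) > 0$.

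Finally I would sharpen this to the stated inequality $a \le C\|u\|_{L^p(\omega)}^{p/\alpha}$, which includes the exponent $p/\alpha$ and the dependence on $a$. Here $a$ can in principle be large (it is $\ge 1$ but the oscillation quotient could make it up to $2$), so really the content is the lower bound $\|u\|_{L^p(\omega)} \ge c$ and the precise exponent bookkeeping: tracking the constant $C$ in the energy inequality, it is actually proportional to a power of $\|b\|_{L^{3/2,\beta}(\omega)}R$ and, crucially, the loss from Young's inequality in handling the drift scales as $\delta^{-1}$ where $\delta$ must shrink like a power of... this is where $\beta>1$ (equivalently $\alpha<\infty$) enters quantitatively and produces the exponent $p/\alpha$ rather than $p$. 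I would carry the $a$-dependence through by normalizing $u/a$ (so the renormalized function has $u|_{r=0}=1$ exactly), running the above to get $\|u/a\|_{L^p(\omega)} \ge c$ — wait, that gives $\|u\|_{L^p} \ge ca$, the wrong direction — so instead the correct path is: the Dirichlet energy bound on $w=\log u$ is $\|\nabla w\|^2 \le C(1 + |\langle w\rangle|)$ and $\langle w \rangle \le \log a$ trivially; iterating the interplay between the Poincaré inequality and this energy bound (a De Giorgi–type finite iteration, exactly as the $\alpha/\beta$ Hölder exponents suggest) yields $\langle w\rangle \ge \alpha^{-1}\log a - C$, hence $\|u\|_{L^p(\omega)}^{p} \ge e^{p\langle w\rangle} \ge c\, a^{p/\alpha}$, which is \eqref{lb}.

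The main obstacle, I expect, is the drift term $\int \mu^2 b\cdot\nabla w$: making the absorption into $\int\mu^2|\nabla w|^2$ quantitative and scale-invariant requires the mixed-norm hypothesis \eqref{bassumption} with $\beta>1$ — if $\beta=1$ the relevant space-time integral of $|b|^2$ would be only borderline (critical) and Young's inequality would not close — and it is precisely the room afforded by $\beta>1$ that must be converted, without loss, into the dual exponent $\alpha$ appearing in the final exponent $p/\alpha$. Secondary care is needed to make the axis term $\tfrac2r\partial_r w$ genuinely harmless in the logarithmic (rather than power) estimate, and to justify all integrations by parts given $u>0$ is only achieved after the additive-constant regularization noted before \eqref{omega}.
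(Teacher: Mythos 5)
Your proposal diverges from the paper's argument in a way that discards the very mechanism producing the constant $a$ in \eqref{lb}. In the paper the lemma is proved in one shot by testing \eqref{Leq} against $pu^{p-1}\zeta^2$ with $p\in(0,1)$: the Dirichlet term $I_3=\tfrac{4(p-1)}{p}\int|\nabla(u^{p/2}\zeta)|^2$ has a favorable sign precisely because $p<1$; the cutoff and drift terms are $O(R^{-2}\int u^p)$ plus an $I_5$ handled by H\"older against \eqref{bassumption} much as you attempt; and --- crucially --- integrating the $\tfrac{2}{r}\partial_r$ piece by parts in $r$ leaves a boundary term at $r=0$, namely $-I_7=\int dt\int dz\,2\zeta^2u^p|_{r=0}\ge \tfrac{3}{2}R^3a^p$. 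That term \emph{is} the left-hand side of \eqref{lb}. You instead dismiss the $\tfrac{2}{r}\partial_r$ contribution as a ``harmless $CR^{-2}$ after Young,'' which throws away the entire content of the lemma.

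Having lost that, you try to re-import $u|_{r=0}=a$ by a ``barrier/continuity argument'' giving $u\ge c$ near the axis, or by a Poincar\'e-plus-iteration step claimed to yield $\langle w\rangle\ge\alpha^{-1}\log a-C$. Neither is justified: the barrier claim would itself need a quantitative parabolic Harnack-type input (which is exactly what Section~5 is in the process of building, not something available at this stage), and the iteration step is asserted without an actual scheme --- your last paragraph even records a retraction (``wait, that gives $\|u\|_{L^p}\ge ca$, the wrong direction''). Structurally, testing against $\mu^2/u$ and controlling $\langle\log u\rangle$ over time is precisely the argument the paper runs in the \emph{next} subsection (Lemma~\ref{NashlowL}), and in the paper's architecture that Nash argument takes the present lemma as an input through \eqref{A} to ensure the set $W$ has positive measure; proving the present lemma by a full Nash-style dynamical estimate is therefore a detour at best and circular at worst. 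The drift estimate and the role of $\beta>1$ in your write-up are the right shape, but without the axis boundary term $I_7$ the proof cannot close, and that is the concrete gap to fix.
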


Notice that  for $b$  satisfying \eqref{assumption} with  any $0\le \epsilon \le 1/2$ fixed, there is a $\beta$ such that the condition \eqref{bassumption} is satisfied.   The following proof is a small modification of the proof
in \cite{CSTY}.

\bigskip
\begin{proof}  We  test the equation with
$pu^{p-1} \zeta^2$ for $0<p<1$ and $\zeta \ge 0$ to have
\[
 -I_7 =\sum_{j=2}^6 I_j.
\]
where
\begin{equation*}
\begin{aligned}
\int_{Q(R)} pu^{p-1} \zeta^2 \frac{\pd u}{\partial t}
= &
\bkt{\int_{B_R} \zeta^2 u^p}^0_{t_1} - \int_{Q(R)} u^p 2 \zeta \frac{\pd \zeta}{\partial t}
\equiv I_1 + I_2,
\\
\int_{Q(R)} pu^{p-1} \zeta^2 (-\Delta u)
=& \frac {4(p-1)}p \int_{Q(R)} |\nabla
(u^{p/2}\zeta)|^2
\\
&+ \int_{Q(R)} 2 u^p \bkt{-|\nabla \zeta|^2 + \frac
{p-2}p \zeta\Delta \zeta}
\equiv I_3 + I_4,
\\
\int_{Q(R)}  pu^{p-1} \zeta^2 b \cdot \nabla u
= &
- \int_{Q(R)}  2u^p b  \cdot \zeta \nabla \zeta
\equiv I_5,
\\
\int_{Q(R)} pu^{p-1} \zeta^2 \frac 2r \pd_r u
= & - \int_{Q(R)} 4 u^p\zeta \zeta _\rho
/\rho - \int_{- R^2}^{0} dt\int_{\mathbb{R}} dz ~ 2 (\zeta^2 u^p)|_{r=0}
\\
\equiv I_6 + I_7.
\end{aligned}
\end{equation*}
For arbitrary $p\in (0,1)$, we see that $I_3$ and $I_7$ are both
non-positive.

Recall $\rho=|x|$.  We choose $\zeta = \zeta_1(\rho) \zeta_2(t)$ where
$\zeta_1(\rho) =1$ in $B_{R/2}$ and $\zeta_1(\rho)$ has compact support
in $B_R$; also $\zeta_2(t) = 1$ if
$t\in  (-\frac{7}{8}R^2, -\frac{1}{8}R^2)$ and $\zeta_2(t)$ has compact support in
$(-R^2, 0)$.
 Thus $I_1=0$ and we have
\[
\frac{6}{4} R^3 a^p \le -I_7 =\sum_{j=2}^6 I_j.
\]
Clearly,
\[
I_2 \le \frac {C}{ R^2} \int_{Q(R)} u^p, \quad I_3 \le 0,\quad  I_4+ I_6
\le \frac {C}{R^2} \int_{Q(R)} u^p .
\]

For any dual $\alpha, \beta$, we now bound $I_5$:
$$
|I_5| \le  R^5 R^{-1}\|b \|_{L^{3/2, \beta}_{t,x}(\omega)}
\|u^p\|_{L^{3, \alpha}_{t,x}(\omega)}.
$$
We thus have for any $\alpha \ge 3$
\[
a^p \le \frac {C }{ R^5} \int_{Q(R)} u^p
+  \|u^p\|_{L^{3, \alpha}_{t,x}(\omega)}
\le  \|u^p\|_{L^{\alpha}_{t,x}(\omega)} \le \|u\|_{L^{\alpha p}(\omega)}^p.
\]
Since $p$ is arbitrary positive number less than one, this proves the Lemma.
Notice that we only use $I_3 \le 0$ in this case.

\end{proof}

\subsection{Nash inequality}

In this section we prove a Nash inequality.
We shall need this inequality in the next section when we prove a Nash lower bound estimate.

Consider a function $f$, which satisfies the bounds $0 \le f  \le M$ for some $M \ge 1$.  Let $\mu$ be a probability measure.
Now consider the average
$$
\alpha= \int \log f ~d \mu.
$$
And define  $g = \log f -\alpha$.  We have the following inequality
\begin{equation}
\frac{\|f\|_1}{M}  \bigg |  \alpha -  \log \int f d \mu  \bigg  |
\le   \|g\|_2.
\label{nashINEQ}
\end{equation}
Above we are using the following definition
$$
\|g\|_p := \left(\int |g|^p~d\mu\right)^{1/p}.
$$
In the rest of this section we will give a short proof.

\bigskip

\begin{proof}  For $0\le \beta\le 1$ we have
$$
 \partial_\beta \log \int e^{\beta g} d \mu
=
\frac {\int g  e^{\beta g} d \mu }  {\int e^{\beta g} d \mu }
=
\frac {\int g f^{\beta } d \mu }  {\int f^{\beta } d \mu }
\le
\frac {\| g \|_2  \|  f^{\beta} \|_2 }  {\| f^{\beta} \|_1 }
\le
\frac {\| g \|_2  M^\beta }  {\| f^{\beta} \|_1 }.
$$
Additionally, since $f$ is bounded, we have
$$
\int f^\beta d \mu =  M^\beta \int  \left ( \frac f M \right ) ^\beta d \mu
\ge  M^\beta \int  \left ( \frac f M \right )  d \mu
=  M^{\beta-1} \| f \|_1.
$$
We conclude
$$
\partial_\beta \log \int e^{\beta g} d \mu
\le   \frac {M \| g \|_2   }  {\| f \|_1 }.
$$
This is all we need.  Now integrate the above with respect to $\beta$ from $0$ to $1$ to obtain

$$
\log \int e^{g} d \mu=
\log \int f d \mu - \alpha
\le   \frac {M \| g \|_2   }  {\| f \|_1 }.
$$
Hence we have \eqref{nashINEQ}.
\end{proof}

\subsection{ Nash's lower bound}

Consider solutions $u$ to the equation \eqref{Leq} which satisfy \eqref{ua}.
Let $v = - \log u$. Then $v$ solves the equation
$$
\partial_t v = \Delta v -  \frac 2 r  \partial_r v  - (b\cdot \nabla)  v - (\nabla v)^2.
$$
We will show that solutions to this equation satisfy one of the fundamental inequalities in the work of Nash.

First we define $\eta(x)$ to be smooth and radial with $\eta=1$ on $B_{1/2}$ and support in $B_1$.  We rescale
$$
 \eta_R (x) = \eta (x/R) R^{-3/2}.
$$
Further suppose that $\int_{\mathbb{R}^3} \eta^2 dx = 1$.  Now we may define the weighted spaces
\[
\| f  \|_{L^p (\eta_R)}^p = \int_{\mathbb{R}^3} |f|^p ~\eta_R^2 dx.
\]
Now we may state the following lemma.

\begin{lemma}\label{NashlowL}
Suppose that for some  $0\le  q < 1$ we have
\begin{equation}\label{bassum2}
\|b(s) \|_{L^2(\eta_R)} \le  CR^{- 1+q} |s|^{-q/2}.
\end{equation}
Then there is a $\delta> 0$ such that
\begin{equation}\label{Nashlow}
- \int_{\mathbb{R}^3}  \log u(x, t) ~\eta_R^2 dx \le C, \quad \text{for} \; - \delta R^2 \le t <0.
\end{equation}
\end{lemma}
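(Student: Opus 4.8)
The plan is to differentiate the quantity $\Phi(t) = -\int_{\R^3} \log u(x,t)\,\eta_R^2\,dx = \int_{\R^3} v(x,t)\,\eta_R^2\,dx$ in time and show it cannot grow too fast, then combine with the Nash inequality \eqref{nashINEQ} to close. Using the equation $\partial_t v = \Delta v - \tfrac2r\partial_r v - (b\cdot\nabla)v - (\nabla v)^2$, I would multiply by $\eta_R^2$ and integrate. The Laplacian term, after integration by parts, produces $-\int \nabla v\cdot\nabla(\eta_R^2)$; the $-\tfrac2r\partial_r v$ term integrates against $\eta_R^2$ with a boundary contribution at $r=0$ which (as in Section 5) has a favorable sign or vanishes because $\eta_R$ is smooth and radial, and otherwise contributes a term controlled by $R^{-1}$ times $\|\nabla v\|$-type quantities; the drift term $-\int (b\cdot\nabla)v\,\eta_R^2$ is handled by Cauchy--Schwarz using assumption \eqref{bassum2}; and crucially the nonlinear term $-\int(\nabla v)^2\eta_R^2$ has the \emph{good} sign and will be used to absorb the other error terms via Young's inequality. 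The upshot should be a differential inequality of the form
\[
\frac{d}{dt}\Phi(t) \le -c\int_{\R^3}|\nabla v|^2\eta_R^2\,dx + C R^{-2} + C R^{-2+2q}|t|^{-q},
\]
where the $|t|^{-q}$ term is integrable in $t$ near $0$ since $0\le q<1$, contributing $O(R^{-2}\cdot(\delta R^2)^{1-q}) = O(1)$ over $[-\delta R^2,0)$.

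Next I would bring in the Nash inequality \eqref{nashINEQ}. Apply it with $f = u$ (note $0\le u\le 2\le M$ for, say, $M=2$), $d\mu = \eta_R^2\,dx$, and $\alpha = \int \log u\,d\mu = -\Phi(t)$, $g = \log u - \alpha = -v - \alpha$, so $\|g\|_2^2 = \int |\log u + \Phi|^2\eta_R^2\,dx$. Since $a = u|_{r=0}\ge 1$ and, more importantly, there is a lower bound on $\|u\|_p$ available from Lemma \ref{NashlowL}'s preceding lemma under the drift hypothesis, the quantity $\|f\|_1/M$ is bounded below by a positive constant; hence \eqref{nashINEQ} gives
\[
\big|\,\Phi(t) + \log\!\int u\,\eta_R^2\,dx\,\big| \le C\,\|g\|_2.
\]
Because $u\le 2$, $\log\int u\,\eta_R^2\,dx \le \log 2$, so this reads $\Phi(t) \le C\|g\|_2 + C$. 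Now $\|g\|_2^2 = \int|v - \bar v|^2\eta_R^2\,dx$ with $\bar v = \int v\,\eta_R^2\,dx = \Phi$, and by the Poincar\'e-type inequality (on the support of $\eta_R$, at scale $R$) this is bounded by $C R^2\int|\nabla v|^2\eta_R^2\,dx$ plus lower-order terms from the cutoff. Thus $\Phi(t)^2 \le C R^2\int|\nabla v|^2\eta_R^2\,dx + C$.

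Finally I would combine the two: the differential inequality says $\Phi'(t) + c R^{-2}\Phi(t)^2 \le C R^{-2} + C R^{-2+2q}|t|^{-q} + (\text{lower order})$, a Riccati-type inequality. Integrating this from $-\delta R^2$ to $t$ and using that the right-hand side is integrable (contributing $O(1)$ for $\delta$ fixed) shows $\Phi(t)$ stays bounded as $t\to 0^-$, which is exactly \eqref{Nashlow}. A point needing care: at the initial time $-\delta R^2$ we need $\Phi(-\delta R^2)$ to be finite, which follows since $u$ is smooth and positive away from $t=0$ (having added an arbitrarily small constant as in Section 5.1) and bounded above by $2$, while the lower bound on $\|u\|_p$ from the previous lemma prevents $\Phi$ from being $+\infty$ there. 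The main obstacle I anticipate is making the absorption precise: one must verify that the constant $c$ in front of $-\int|\nabla v|^2\eta_R^2$ genuinely survives after subtracting the drift error $\le \delta'\int|\nabla v|^2\eta_R^2 + C_{\delta'}R^{-2+2q}|s|^{-q}$ and the cutoff errors, which requires the smallness of $\beta - 1$ (equivalently, $q$ bounded away from $1$) exactly as reflected in hypothesis \eqref{bassum2}; and then that the Riccati integration genuinely yields a bound uniform in how close $t$ is to $0$, rather than one that degenerates. This is where the strict inequality $q<1$ is essential.
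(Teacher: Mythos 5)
Your proposal follows the same overall strategy as the paper: rescale so $R=1$, compute $\partial_s\langle v\rangle(s)$ for $\langle v\rangle(s)=\int v\,\eta^2\,dx$, use the good sign of $-\int|\nabla v|^2\eta^2$ to absorb the cutoff and drift errors (with $q<1$ making the $|s|^{-q}$ contribution integrable), feed the Dirichlet form through the spectral gap estimate and the Nash inequality \eqref{nashINEQ}, and close with a Riccati-type integration. This is essentially the argument of Section 5.4, so your high-level plan is correct.

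There is, however, one genuine gap and one confused justification in the closing step. The Nash inequality gives the Riccati coefficient as $\bigl(\|u(s)\|_{L^1(\eta)}/M\bigr)^2$, and you assert that ``$\|f\|_1/M$ is bounded below by a positive constant.'' That is not established: the preceding lemma gives $\|u\|_{L^p(\omega)}\ge \kappa$, which is a space-time average, not a pointwise-in-$s$ lower bound on $\|u(s)\|_{L^1(\eta)}$. Conceivably $\|u(s)\|_{L^1(\eta)}$ is tiny on some subinterval, and there the quadratic term you need for the Riccati mechanism simply disappears. The paper handles this by introducing the set $W=\{s:\|u(s)\|_{L^1(B_{1/2})}\ge \kappa/10\}$, showing (from $u\le M$ and the space-time lower bound) that $|W|\ge \kappa/(10M)$, and keeping only the indicator $\chi_W$ in front of the quadratic term; the final Riccati integration then uses only $\int_{-1}^t\chi_W\,ds>0$, not a bound at every time. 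You cannot skip this step. Separately, your worry about $\Phi(-\delta R^2)$ being finite is resolved by a different mechanism than the one you cite: the $\|u\|_p$ lower bound does not prevent $\Phi$ from being $+\infty$ (a log can be $-\infty$ on a null set while $\|u\|_p>0$). Finiteness comes from the artificial positive lower bound $u\ge\epsilon$, and the uniformity in $\epsilon$ comes from the Riccati integration itself: after the preliminary case split (either $\langle v\rangle(s_0)$ is already small for some $s_0$, in which case Gronwall finishes; or $\langle v\rangle$ is large throughout), one gets $\partial_s\langle v\rangle\le -C_1\chi\langle v\rangle^2$, and dividing by $\langle v\rangle^2$ and integrating yields $\langle v\rangle(t)^{-1}\ge \langle v\rangle(-1)^{-1}+C_2\ge C_2$, a bound independent of how large $\langle v\rangle(-1)$ is. You should make both the $W$-construction and this case split explicit; as stated, the argument does not close.
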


Notice that this implies the key step, equation (3.9) in \cite{CSTY},
and thus proves the  H\"older continuity. In fact, since \eqref{Nashlow} holds for
every time, it is stronger than (3.9) which involves time integration.
Further we remark that \eqref{assumption} is enough to ensure \eqref{bassum2} with $q=2\epsilon$ whenever $\epsilon>0$.

\bigskip

\begin{proof}  We first rescale by a factor $R$ for $x$ and $R^2$ for
$t$.  We define $v_R(x,t)=v(Rx, R^2 t)$, which satisfies
$$
\partial_s v_R
=
\Delta v_R -\frac{2}{r}\frac{\partial v_R}{\partial r} - (R b_R \cdot \nabla) v_R  - (\nabla v_R)^2.
$$
Above $b_R(x, t) = b(Rx, R^2 t)$.  Our goal is now to prove that
\begin{equation}\label{Nashlow-1}
- \int_{\mathbb{R}^3}   v_R ~\eta^2 dx \le C,
 \quad \text{for} \; - \delta \le t <0.
\end{equation}
The rescaled version of the assumption on $b$ becomes
\begin{equation}\label{b2-1}
\|Rb_R( \cdot, s) \|_{L^2 (\eta)} \le  C |s|^{-q/2}.
\end{equation}
Since we will only use \eqref{b2-1}, we shall drop all $R$ in the subscript from now on and set $R=1$. We  have
$$
 \int_{\mathbb{R}^3}  \partial_s v ~ \eta^2 dx
 =  \int_{\mathbb{R}^3} \left\{ \Delta v -  \frac 2 r  \partial_r v  - (b\cdot \nabla)  v \right\}~\eta^2 dx
  - \int_{\mathbb{R}^3}   (\nabla v)^2 ~\eta^2 dx.
$$
We will  estimate the terms in parenthesis.

For the first term
we use the Cauchy-Schwartz inequality
$$
\int_{\mathbb{R}^3} \Delta v  ~\eta^2 dx
=
-2 \int_{\mathbb{R}^3} \nabla v \cdot \nabla \eta ~\eta dx
\le \frac{1}{8} \int_{\mathbb{R}^3}   |\nabla v|^2 ~\eta^2 dx
+
8 \int_{\mathbb{R}^3}  |\nabla \eta|^2 ~ dx.
$$
Next let
$$
\bar{v}:= v(s, x) -\langle v\rangle(s),
\qquad
\langle v\rangle(s) :=  \int_{\mathbb{R}^3}  v (s, x) ~\eta^2 dx.
$$
We now consider the middle term inside the parenthesis.
Integrating by parts, we have
\begin{equation*}
\begin{aligned}
- \int_{\mathbb{R}^3}     \frac 2 r   \partial_r v ~\eta^2 dx
=
- \int_{\mathbb{R}^3}     \frac 2 r   \partial_r \bar{v} ~\eta^2 dx
=&
- \int_{-\infty}^{\infty}  2    \bar{v}~ \eta^2 dz \Big |_{r=0}^\infty
+
\int_{\mathbb{R}^3} \frac 2 r \bar{v}  \partial_r \eta^2 dx
\\
\le&
 C-C\langle v\rangle(s)
+
4\int_{\mathbb{R}^3}  \bar{v}  \frac{\partial_r \eta}{r} ~\eta dx.
\end{aligned}
\end{equation*}

We have just used
$$
- \int_{-\infty}^{\infty}  2    \bar{v}~ \eta^2 dz \Big |_{r=0}^\infty
= \int_{-\infty}^{\infty}  2    \bar{v}~ \eta^2 dz \Big |_{r=0}
\le  \int_{-\infty}^{\infty}  2    v(t, z, r=0)~ \eta^2(z, r=0) dz
\le C-C\langle v\rangle(s).
$$
We remark that the constant is $u(t,z,r=0)=a \ge 1$.
Furthermore,
$$
4\int_{\mathbb{R}^3}  \bar{v}  \frac{\partial_r \eta}{r} ~\eta dx
\le
4 \|\bar{v}   \|_{L^2 (\eta)} \| \partial_r \eta / r \|_{L^2(B_1)}
\le C + \frac{1}{8}\|\nabla v   \|_{L^2 (\eta)}^2.
$$
Here we used the spectral gap estimate
$$
 \int_{\mathbb{R}^3} |\nabla v|^2~ \eta^2 dx
 \ge
 c \int_{\mathbb{R}^3} \bar{v}^2  ~ \eta^2 dx.
$$
Finally we consider the last term in parenthesis.
We use the Cauchy-Schwartz inequality together with \eqref{b2-1} to obtain
\begin{gather*}
\int_{\mathbb{R}^3} (b  \cdot\nabla)  v ~ \eta^2 dx
 \le
 \|b  \|_{L^2 (\eta)} \| \nabla v \|_{L^2(\eta)}
 \le 4\|b  \|_{L^2 (\eta)}^2+ \frac{1}{4}\| \nabla v \|_{L^2(\eta)}^2
  \le C  |s|^{-q}+ \frac{1}{4}\| \nabla v \|_{L^2(\eta)}^2.
\end{gather*}
Combining the inequalities in this paragraph
we have
$$
 \int_{\mathbb{R}^3}    \left\{ \Delta v -  \frac 2 r  \partial_r v - (b\cdot \nabla)  v  \right\}~\eta^2 dx
  - \frac{1}{2}\int_{\mathbb{R}^3}   (\nabla v)^2 ~\eta^2 dx
  \le
 C (1+  |s|^{-q}-\langle v\rangle(s))  .
$$
Thus there is a constant $C$ such that
$$
 \int_{\mathbb{R}^3}  \partial_s v (s) ~ \eta^2 dx
 \le
 C (1+  |s|^{-q}-\langle v\rangle(s)) - \frac{1}{2}\int_{\mathbb{R}^3}   |\nabla v|^2 ~\eta^2 dx.
$$
We will use this inequality to prove the lemma.

We plug the Nash inequality \eqref{nashINEQ} with $M=2$ into the inequality above (also using the spectral gap estimate) to obtain
\begin{equation} \label{N2}
 \partial_s \langle v\rangle(s)
 \le
 C (1+  |s|^{-q}-\langle v\rangle(s))
 -  \frac{C \| u(s) \|_{L^1(\eta)}^2}{M^{2}}
 \big   |  \langle v\rangle(s) +  \log \| u(s) \|_{L^1(\eta)}  \big  | ^2.
\end{equation}
This differential inequality will now be manipulated  into a form which we find useful.
For some $\kappa > 0$, \eqref{lb} lets us conclude
\begin{equation} \label{A}
 \| u \|_{L^1(Q_{1/2})} \ge \kappa.
\end{equation}
Let $\chi$ be the characteristic function of the non-empty set
\[
W:= \{ s:  \| u(s) \|_{L^1(B_{1/2})} \ge \kappa/10 \}.
\]
Since $u$ is bounded above by a constant $M$, it follows from \eqref{A} that
\begin{equation}\label{W}
|W| \ge \frac \kappa {10 M}.
\end{equation}
Hence for some $O(1)$ constants $C \ge 1$ and $\gamma >0$ we have
\[
\partial_s \langle v\rangle(s)
 \le
 C (1+ |s|^{-q} -\langle v\rangle(s))-  C \chi (s) \| u(s)\|_{L^1(\eta)} ^2 \big   |  \langle v\rangle(s) +  \log \| u(s) \|_{L^1(\eta)}  \big  | ^2
\]
\begin{equation} \label{N3}
 \le   C (1+ |s|^{-q} -\langle v\rangle(s))-
 \gamma  \chi (s)  \big   |  \langle v\rangle(s) +  \log \| u(s) \|_{L^1(\eta)}   \big  | ^2.
\end{equation}
Notice that, since $q<1$,  this inequality implies for  $s_2 \ge s_1$ that
\[
\langle v\rangle(s_2) \le e^{C|s_1-s_2|}\langle v\rangle(s_1) + Ce^{C|s_2|}.
\]
Therefore we may assume that
\begin{equation}\label{a1}
\langle v\rangle(s) \ge  4 |\log (10/\kappa)|+ 4 C (1+ |s|^{-q}) , \qquad \text{for all} \, -1 \le s \le -  \kappa /(20 M).
\end{equation}
Since, otherwise, we would have $\langle v\rangle(s) \le C_1 $ for some $s_0$ in that range and then for all times later on. This would prove the Lemma.

Under assumption \eqref{a1}, we have for $ -1 \le s \le - \kappa /(20 M)$ and some positive constant $C_1$ that
\begin{equation} \label{N4}
 \partial_s \langle v\rangle(s)
 \le  -  C_1  \chi (s)  \langle v\rangle(s)^2.
\end{equation}
Divide both sides by $\langle v\rangle(s)^2$  and integrate the inequality from $-1$ to $t$.  We have for $ t = - \frac {\kappa } {20 M}$ the following
\begin{equation} \label{N5}
\langle v\rangle(-1)^{-1}- \langle v\rangle(t)^{-1}
 \le  -  \int_{-1}^t  C_1\chi (s) ds  \le - C_2.
\end{equation}
Notice that the range of $t$ and \eqref{W}  guarantee that $C_2 > 0$.
Since by assumption \eqref{a1}, $\langle v\rangle(-1) \ge 0$,
this proves  \eqref{Nashlow} at time $t=- \frac {\kappa } {20 M}$ and hence all the time later  on.
\end{proof}

\subsection{Proof of H\"older continuity}

From Lemma \ref{NashlowL}, there is a $0< \tau <1$ such that  for any $\e > 0$ there is an $\delta$ so that
\begin{equation}\label{keysmallness}
\left|\left\{X \in Q(R,\tau): u(X) \le \delta\right\}\right|
\le
\epsilon |Q(R,\tau)|,
\end{equation}
Let $U=\delta -u$, where $\delta$ is the constant from \eqref{keysmallness}. $U$ is clearly a solution of
\eqref{Leq} and $U|_{r=0} =\delta -a< 0$. So we can
apply Lemma \ref{weakH} to conclude
\begin{equation}\label{eq15}
 \sup_{Q(d/2)} ( \delta - u)  \le \bke{ \frac{C}{|Q(d)|}
\int _{Q(d)} |(\delta -u)^+|^2}^{1/2}.
\end{equation}

Let $d= \sqrt{ \tau } R$
so that
$Q(d) \subset Q(R, \tau)$.  By \eqref{eq15}
and \eqref{keysmallness},
\begin{align*}
\delta - \inf_{Q(d/2)} u & \le \bke{ \frac{C}{|Q(d)|} \int
_{Q(d)} |(\delta -u)^+|^2}^{1/2}
\\
& \le
\left( \frac{C\delta^2 \epsilon |Q(R,  \tau)|}{|Q(d)|}
\right)^{1/2}
=
C \delta \epsilon^{1/2} \left( \tau\right)^{-3/4},
\end{align*}
which is less than $\frac \delta 2$ if $\epsilon$ is chosen sufficiently
small. We conclude
\[
\inf_{Q(d/2)} u \ge \frac \delta 2 .
\]
This is the lower bound we seek.

We define
\[
m_d \equiv \inf_{Q(d/2)} \Gamma,
\quad M_d \equiv \sup_{Q(d/2)} \Gamma.
\]
Then from \eqref{udef} we have
$$
\inf_{Q(d/2)} u =
\left\{
\begin{aligned}
2(m_d - m_2)/M & \quad \text{if}\quad -m_2 > M_2,
\\
2(M_2 - M_d)/M & \quad \text{else},
\end{aligned}
\right.
$$
Notice that both expressions above are non-negative in any case; thus we can
add them together to observe that
$$
\frac \delta 2
\le
\frac{2}{M}\left\{M-\osc(\Gamma,d/2)\right\}.
$$
Here
$\osc(\Gamma,d/2)=M_d-m_d$
and
$\osc(\Gamma,2R)=M_2-m_2=M$.
We rearrange the above
$$
\osc(\Gamma, d/2) \le
\left(1-\frac{\delta}{4} \right)
\osc(\Gamma, 2R).
$$
This is enough to produce the desired H{\"o}lder continuity via the standard argument.

\section{Proof of main theorem}\label{pfOFmainTHM}

In this section we prove Theorem \ref{mainthm} under the assumption
\eqref{assumption1}. It is similar to \cite{CSTY} section 2, which
assumes a stronger assumption $|v|\le C_*(r^2-t)^{-1/2}$.
First we show that our solutions, which satisfy
\eqref{assumption1}, are in fact suitable weak solutions. Recall that
a pair of suitable weak solution $(v,p)$ satisfy
\begin{equation}\label{SWSbound}
v \in L^\infty_t L^2_x(Q), \quad \nb v \in L^2(Q), \quad p \in
L^{3/2}(Q).
\end{equation}
and the local energy inequality.

Fix $\beta \in (1,5/3)$. For $t \in (-T_0,0)$, we have by
\eqref{assumption1}
\[
\int_{\R^3} \frac{|v(x,t)|^4 }{|x|^{\beta}} \,dx \le \int_{\R^3}
\frac {1}{|x|^\beta} \frac{C_* rdr dz}{(r^2-t)^{2-8\e} r^{8\e}
|t|^{4 \e}},
\]
\[
= C \int_{\R^2} \frac{C_* rdr }{(r^2-t)^{2-8\e} r^{\beta-1+8\e}
|t|^{4 \e}} = C C_* |t|^{-(1 + \beta)/2},
\]
where we have used the scaling and $\beta-1+ 8 \e < 2$ so that it
is integrable. Define $R_i$'s to be the Riesz transforms: $R_i =
\frac{\partial_i}{\sqrt{-\Delta}}$. We consider the singular
integral
\[
\td p(x,t) = \int \sum_{i,j}\pd_i  \pd_j (v_i v_j)(y) \frac
1{4\pi|x-y|} \, dy = \sum_{i,j}R_i R_j(v_i v_j ).
\]
Since $|x|^{-\beta}$ is a $A_2$ weight function, we have
\begin{equation}\label{eq4-1}
\int \frac 1{|x|^{\beta}} |\td p(x,t)|^2 \,dx \le c\int \frac
1{|x|^{\beta}} |v(x,t)|^4 \,dx \le c+c|t|^{-(1+\beta)/2} .
\end{equation}
With this estimate, the same argument as in \cite{CSTY}
proves that $p(x,t)=\td p(x,t)$ for all $x$ and for almost every
$t$. Moreover, from \eqref{eq4-1} and $\beta<5/3$ we conclude that
\begin{equation}
\int_{Q_{1}} |p(x,t)|^{3/2}\,dxdt \le c \int_{-1}^0   \bke{
\int_{B_{1}}  \frac 1{|x|^{\beta}} |p(x,t)|^2 \,dx}^{3/4} dt \le C
. \label{p32}
\end{equation}

Since $\e$ is arbitrarily small, the pointwise estimate
\eqref{assumption1} on $v$ implies
\begin{equation}\label{vbd}
v \in L^s_tL^q_x(Q_1), \quad \frac{1}{q}+\frac{1}{s}> \frac{1}{2},
\quad q<\frac{1}{\e}.
\end{equation}
We will use $(s,q)=(3,3)$. Thus the vector product of \eqref{nse}
with $u \ph$ for any $\ph \in C^\infty_c(Q_{1})$ is integrable in
$Q_{1}$ and we can integrate by parts to get
\begin{equation}
 2\int_{Q} |\nabla v|^2 \varphi
= \int_{Q} \left\{|v|^2(\partial_t\varphi+\Delta \varphi) +
 ( |v|^2+2p)v\cdot\nabla\varphi\right\}, \quad \forall \ph \in
C^\infty_c(Q), \ \ph \ge 0. \label{localE}
\end{equation}

For any $R \in (0,1)$ and $t_0 \in (-R^2,0)$, we can further
choose a sequence of $\ph$ which converges a.e. in $Q_R$ to the
Heviside function $H(t_0-t)$. Since the limit of $\pd_t\ph$ is the
negative delta function in $t$, this gives us the estimate
\begin{equation}\label{SWS-v}
\mathop{\mathrm{ess\, sup}}_{-R^2<t<0}\int_{B_R}|v(x,t)|^2dx +
\int_{Q_R} |\nb v|^2 \le C _{R} \int _{Q_1} (|v|^3 + |p|^{3/2}).
\end{equation}
These estimates show that $(v,p)$ is a suitable weak solution of
\eqref{nse} in $Q_{R}$. Note that these bounds depend on $C_*$ of
\eqref{assumption1} only, not on $\norm{p}_{L^{5/3}(\R^3 \times
(-T_0,0))}$.

To prove Theorem \ref{mainthm}, it suffices to show that every
point on the $z$-axis is regular.  Suppose now a point
$x_*=(0,0,x_3)$ on the $z$-axis is a singular point of $v$.
Without loss of generality, we assume that $x_3=0$. We will use
the following regularity criterion, a variant of the criterion in
\cite{MR673830} and proven in \cite{MR1685610}, to obtain a
contradiction.

\begin{lemma}\label{th6-1}
Suppose that $(v,p)$ is a suitable weak solution of \eqref{nse} in
$Q(X_0,1)$.  Then there exists an $\e_1>0$ so that $X_0$ is a
regular point if
\begin{equation} \label{eq2-4-1}
\limsup_{R \downarrow 0} \frac{1}{R^2} \int_{Q(X_0,R)} |v|^3 \le
\e_1.
\end{equation}
\end{lemma}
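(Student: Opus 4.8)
\bigskip
\textbf{Proof proposal.} By translation I may assume $X_0=(0,0,0)$, so the hypotheses are the suitable-weak-solution bounds \eqref{SWSbound} and the local energy inequality \eqref{localE} on $Q_1$, together with \eqref{eq2-4-1}. For $0<r\le 1$ I would introduce the scale-invariant quantities
$$
A(r)=\frac1r\sup_{-r^2<t<0}\int_{B_r}|v|^2,\qquad
E(r)=\frac1r\int_{Q_r}|\nb v|^2,\qquad
C(r)=\frac1{r^2}\int_{Q_r}|v|^3,\qquad
D(r)=\frac1{r^2}\int_{Q_r}|p|^{3/2},
$$
all invariant under \eqref{rescale}; the assumption \eqref{eq2-4-1} says precisely that $C(r)\le 2\e_1$ for all $r$ below some $r_0$. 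The plan is in three steps: (i) upgrade the smallness of $C$ to smallness of $C+D$ at small scales; (ii) feed this into the classical $\e$-regularity criterion — the one requiring smallness of both $\int|v|^3$ and $\int|p|^{3/2}$ — to get $v\in L^\infty$ near $0$; (iii) bootstrap to smoothness.

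For step (i) the velocity quantities come essentially for free: testing \eqref{localE} with a cutoff that is $1$ on $Q_{r/2}$ and supported in $Q_r$, plus H\"older, yields $A(r/2)+E(r/2)\le c\,(C(r)^{2/3}+C(r)+C(r)^{1/3}D(r)^{2/3})$, and parabolic Gagliardo--Nirenberg--Sobolev gives $C(\rho)\le c\,(A(\rho)^{3/4}E(\rho)^{3/4}+A(\rho)^{3/2})$. The real work is the pressure, which is nonlocal. Here I would use that a suitable weak solution satisfies $-\Delta p=\pd_i\pd_j(v_iv_j)$ and split, on $B_r$, $p=p_{\mathrm{loc}}+p_{\mathrm{har}}$, where $p_{\mathrm{loc}}$ is the Newtonian potential of $\pd_i\pd_j(v_iv_j\,\mathbf 1_{B_r})$ and $p_{\mathrm{har}}$ is harmonic in $B_r$ for a.e.\ $t$. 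Calder\'on--Zygmund bounds $\|p_{\mathrm{loc}}(t)\|_{L^{3/2}(B_r)}$ by $c\,\|v(t)\|_{L^3(B_r)}^2$ (no loss of smallness, being quadratic in $v$), while the mean value property of harmonic functions, applied on the short interval $(-(\theta r)^2,0)$, gives $\int_{Q_{\theta r}}|p_{\mathrm{har}}|^{3/2}\le c\,\theta^3\int_{Q_r}|p_{\mathrm{har}}|^{3/2}$ for $\theta\le\tfrac12$. Combining these, $D(\theta r)\le c_0\theta\,D(r)+c_0\theta^{-2}C(r)$. Fixing $\theta$ with $c_0\theta\le\tfrac12$ and iterating along $r_0,\theta r_0,\theta^2 r_0,\dots$ (using $C\le 2\e_1$ throughout), a geometric series gives $\limsup_{r\to0}D(r)\le c_1\e_1$ with $c_1$ fixed, hence $\limsup_{r\to0}(C(r)+D(r))\le c_2\e_1$.

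For step (ii) I would choose $\e_1$ small enough that $c_2\e_1$ lies below the threshold in the classical criterion of \cite{MR673830} (for which \cite{MR1685610} supplies a short compactness proof: rescale a would-be sequence of counterexamples to the unit cylinder, use the uniform $L^\infty_tL^2_x\cap L^2_tH^1_x$ bound from the energy inequality to extract strong $L^3_{\mathrm{loc}}$ convergence of the velocities, observe the limit solves the linear Stokes system and is therefore smooth in the interior, and contradict the non-decay of $\int|v|^3$). Rescaling $Q_r\to Q_1$ via \eqref{rescale} at a small scale $r$ where $C(r)$ and $D(r)$ are both small, this criterion yields $v\in L^\infty(Q_\rho)$ for some $\rho>0$. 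For step (iii), in $Q_\rho$ the equation is linear parabolic with bounded coefficient $v\otimes v$ and pressure recovered from $v$, so the standard bootstrap — $v\in C^\alpha$ for all $\alpha<1$ via the Morrey-type decay of $\int|v|^3$ that $L^\infty$ provides, then $p\in C^\alpha$, then Schauder, then iteration — shows $v$ is $C^\infty$ near $0$, so $0$ is regular, contradicting the assumption that it is singular.

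The step I expect to be the main obstacle is the pressure estimate in (i): $D(\theta r)$ cannot be bounded purely locally, and the Newtonian/harmonic splitting is precisely what rescues it — the Newtonian part is quadratic in $v$ and hence absorbed into $C$ with room to spare, while the harmonic part is strictly contracted by the interior gain $\theta^3$ once the geometric ratio $\theta$ is frozen; only afterwards, with $\theta$ fixed, may one take $\e_1$ small to kill the residual $c_0\theta^{-2}C$ term. Everything else — the energy inequality, the Gagliardo--Nirenberg interpolation, the compactness argument behind the reduced criterion, and the final parabolic bootstrap — is by now standard.
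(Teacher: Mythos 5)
The paper does not prove Lemma \ref{th6-1}: it is stated as ``a variant of the criterion in \cite{MR673830} and proven in \cite{MR1685610},'' so the authors simply cite Tian--Xin and do not supply an argument. Your reconstruction is a correct, standard proof of that criterion, and it isolates exactly the nontrivial point: the hypothesis controls only $\int|v|^3$, not $\int|p|^{3/2}$, so one must first manufacture smallness of the scale-invariant pressure quantity $D$. Your Newtonian/harmonic splitting $p=p_{\mathrm{loc}}+p_{\mathrm{har}}$ on $B_r$ does this correctly: Calder\'on--Zygmund makes $p_{\mathrm{loc}}$ quadratic in $v$ (so its contribution to $D(\theta r)$ is $c\,\theta^{-2}C(r)$), while the interior gain for harmonic functions gives $\int_{Q_{\theta r}}|p_{\mathrm{har}}|^{3/2}\le c\theta^3\int_{Q_r}|p_{\mathrm{har}}|^{3/2}$, whence $D(\theta r)\le c_0\theta D(r)+c_0\theta^{-2}C(r)$. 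Freezing $\theta$ first and only then shrinking $\e_1$ is the right order, and you note it explicitly. The passage from the dyadic sequence $\theta^n r_0$ to $\limsup_{r\to0}D(r)$ is harmless since for $\theta^{n+1}r_0\le r\le\theta^n r_0$ one has $D(r)\le\theta^{-2}D(\theta^n r_0)$, and the final invocation of the Caffarelli--Kohn--Nirenberg $C+D$ criterion, plus the standard bootstrap to smoothness once $v\in L^\infty$, closes the argument. The energy-inequality and Gagliardo--Nirenberg bounds you list for $A,E,C$ are correct but not strictly needed for the pressure iteration; they are the ingredients of the one-scale CKN criterion itself, which you may either prove by the compactness route you sketch or simply cite. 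In short: no gap; this is the expected proof, which the paper itself outsources to \cite{MR1685610}.
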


Let $(v^\la,p^\la)$ be the rescaled solutions of \eqref{nse}
defined by
\begin{equation}
\label{rescale-1} v^\la(x,t) = \la v(\la x, \la^2 t),\quad
p^\la(x,t) = \la^2 p (\la x, \la^2 t).
\end{equation}
For $(v^\la,p^\la)$ with $0<\la<1$, the pointwise estimate
\eqref{assumption1} is preserved:
\[
|v^\la(x,t)| \le C_* (r^2-t)^{-1/2+2\e}|t|^{-\e } r^{-2\e}, \quad
(x,t) \in \R^3 \times (-T_0,0).
\]
Fix $R_*>0$. Since we assume $x_*$ is a singular point, by Lemma
\ref{th6-1} there is a sequence $\la_k$, $k\in \N$, so that $\la_k
\to 0$ as $k \to \infty$ and
\begin{equation}
\label{blowup0} \frac{1}{R_*^2} \int_{Q_{R_*}} |v^{\la_k}|^3 =
\frac{1}{(R_*\la_k)^2} \int_{Q(x_*,R_*\la_k)} |v|^3
>\e_1.
\end{equation}
We will derive a contradiction to this statement.

Since $(v^\la,p^\la)$ satisfies the pointwise estimate
\eqref{assumption1}, we have $v^\la \in L^q(Q_1)$ for $q\in
(1,4)$. Moreover, the same argument as above provides the uniform
bounds for $R<1$:
\begin{equation}\label{u-bd}
\quad \mathop{\mathrm{ess\, sup}}_{-R^2<t<0}
\int_{B_R}|v^\la(x,t)|^2dx +  \int_{Q_R} |\nb v^\la|^2 \le
c\int_{Q_1}   |v^\la |^3 + |p^\la|^{3/2}  \le C.
\end{equation}
Following the same argument in section 2.3 and section 2.4 of
  \cite{CSTY}, we conclude from these estimates that
there is a subsequence of $(v^{\la_k},p^{\la_k})$, still denoted
by $(v^{\la_k},p^{\la_k})$, weakly converges to some suitable weak
solution $(\lv,\lp)$ of the Navier-Stokes equations in $Q_R$ and
\[
v^{\la_k} \to \lv,
\]
strongly in $L^q(Q_R)$ for all $1\le q < 4$ and $0<R<1$.

Now the H{\"o}lder continuity of $\Gamma=rv_\theta$ at $t=0$
proven in Section 5 implies
\[
\int_{Q_R} \left|v^\la_\theta\right| \le C\la^\alpha \to 0 \quad
\text{as} \quad \la \downarrow 0
\]
for some $\alpha>0$. Thus the limit $\lv$ has no-swirl,
$\lv_\theta =0$.

Let $\bar \omega = \nb \times \lv$ be the vorticity of $\lv$ and
$\bar \omega_\theta =\pd_z \lv_r- \pd_r \lv_z$ be the $\theta$
component of $\bar \omega$. The function $\Om = \bar
\omega_\theta/r$ solves
\begin{equation}\label{Om2}
\left(\pd_t + \bar{b} \cdot \nb - \Delta - {\frac 2r} \pd_r
\right) \Om=0,
\end{equation}
where $\lv_\th=0$ is used and $\bar{b} = \lv_r e_r + \lv_z e_z=
\lv. $

Since $\bar{v}$ is the limit of $v^{\la_k}$, it satisfies
\eqref{assumption1} and also satisfies \eqref{vbd}. Following the
argument of section 2.4 in \cite{CSTY}, we conclude
from \eqref{vbd} and the estimates for the Stokes system that
\[ \norm{ \nb \lv}_{L^{5/4}_tL^{5/2}_x
(Q_{5/8})} \le C \norm{  \lv}_{ L^{5/2}_tL^{5}_x (Q_{3/4})}^2 + C
\norm{  \lv}_{ L^{5/4}(Q_{3/4})} \le C.
\]
Hence $\Om$ has the bound
\begin{equation}\label{Om3}
\norm{\Om}_{L^{20/19}(Q_{5/8})} \le \norm{ \nb
\lv}_{L^{5/4}_tL^{5/2}_x (Q_{5/8})}
\norm{1/r}_{L^{\infty}_tL^{20/11}_x (Q_{5/8})} \le C.
\end{equation}

Since $\bar{b}$ satisfies \eqref{assumption1}, it also satisfies
\eqref{assumption}. We conclude from the local maximum estimate
Lemma \ref{weakH} and \eqref{Om3} that
\[
 \Om \in L^\infty(Q_{5/16}).
\]
Furthermore we know that $\curl \lv = \bar \omega_\th e_\th \in
L^\infty(Q_{5/16})$ from the above estimate on $\Om$ since
$\bar{v}_\theta=0$. Now we can apply the div-curl estimate
\begin{equation*}
\norm{\nb^{k+1} v}_{L^q(B_{R_2})} \le c \norm{\nb^k \div
v}_{L^q(B_{R_1})} + c \norm{\nb^k \curl v}_{L^q(B_{R_1})} + c
\norm{ v}_{L^1(B_{R_1})},
\end{equation*}
to obtain $L^\infty$ estimate for $\lv$.  Since $\div \lv = 0$ and
$\lv \in L^\infty_t L^1_x(Q_{5/16})$ by \eqref{assumption1}, we
thus conclude $\nb \lv \in L^\infty_t L^4_x(Q_{1/4})$ by taking
$q=4$ and $k=0$ in the div-curl estimate. By the Sobolev
embedding, we have $\lv \in L^\infty(Q_{1/4})$.

Now we can deduce regularity of the original solution from the
regularity of the limit solution. We have shown that
$$
\left| \lv(x,t) \right| \le C^\prime_* \quad \text{in} \quad
Q_{1/4}.
$$
Above $C_*^\prime$ depends upon $C_*$ but not on the subsequence
$\lambda_k$.  Since the constant can be tracked, we may initially
choose $R_*$ sufficiently small to guarantee that
\[
\frac{1}{R_*^2} \int_{Q_{R_*}} |\lv|^3 \le \e_1/2,
\]
where $\e_1$ is the small constant in Lemma \ref{th6-1}. Since
$v^{\la_k} \to \lv$ strongly in $L^3$, for $k$ sufficiently large
we have
$$
\frac{1}{R_*^2} \int_{Q_{R_*}}  |v^\lambda|^3 \le \frac{1}{R_*^2}
\int_{Q_{R_*}}  |\bar v|^3 + \frac{1}{R_*^2} \int_{Q_{R_*}}
|v^\lambda- \bar v |^3 \le \e_1.
$$
But this is a contradiction to \eqref{blowup0}. We have proved
Theorem \ref{mainthm}.

\section*{Appendix: The case $\e=0$}
\addcontentsline{toc}{section}{Appendix}
\setcounter{theorem}{0}
\renewcommand{\thetheorem}{A.\arabic{theorem}}
\setcounter{equation}{0}
\renewcommand{\theequation}{A.\arabic{equation}}

In this appendix we prove Theorem \ref{mainthm} under the assumption
$|v(x,t)|\le C_* r^{-1}$, the $\e=0$ case. The argument in this
appendix was obtained after a preprint of \cite{KNSS} appeared.  Note
that this argument does not take scaling limits and all bounds are
computable.

Let $M$ be the maximum of $|v|$ up to a fixed time $t_{1}$. We will
derive an upper bound of $M$ in terms of $C_*$ and independent of
$t_1$.  We may assume $M>1$. Define
\[
v^M (X, T) = M^{-1} v(X/M, T/M^{2}), \quad X=(X_1,X_2, Z).
\]
For $x=(x_1,x_2,z)$ and $X=(X_1,X_2, Z)$, let
$r=(x_1^2+x_2^2)^{1/2}$ and $R=(X_1^2+X_2^2)^{1/2}$. We have the
following estimates for all $r$ and $R$ for time $t \le t_{1}$ and
$T \le M^2t_{1}$: \be \label{eq:A1} |v (x, t)| \le C/ r, \quad
|v^M (X, T)| \le
 C/ R, \quad
 |\nabla^k  v^M| \le C_k.
\ee The last inequality follows from $\norm{v^M}_{L^\infty}\le 1$
for $t<t_1$ and the regularity theorem of Navier-Stokes equations.
Its angular component (we omit the time dependence below)
$v^M_\theta(R,Z)$ satisfies $ v_\theta^M (0, Z)=0=\partial_Z
v_\theta^M (0, Z)$ for all $Z$. By mean value theorem and
\eqref{eq:A1},
 $|v_\theta^M(R, Z)| \le C
R$ and $|\partial_Z v_\theta^M(R, Z)| \le C R$ for
$R\le 1$.
Together with \eqref{eq:A1} for $R\ge 1$, we get
\[
|v_\th^M| \le C\min(R,R^{-1}),\quad
|\pd_Z v_\th^M| \le C\min(R,1), \quad \text{for} \; R>0.
\]
By \cite[Theorem 3.1]{CSTY}, under the assumption $|v(x,t)|\le C_*
r^{-1}$, $\Gamma= rv_\theta$ satisfies $|\Gamma(r,z,t)| \le C
r^{\alpha}$ when $r$ is small, uniformly in $r$ and $t$, for some $C$ and
small $\alpha >0$ depending on $C_*$. Thus,
$ |v_\theta^M (R, Z)| \le C R^{-1+ \alpha} M^{-\alpha}$ for $R>0$.
From these estimates we have
\be\label{x2} \frac {
|\partial_Z (v_\theta^M)^2 (R)| } {R^2} \le
\frac{C\min(R,R^{-1+ \alpha} M^{-\alpha})\cdot \min(R,1)}{R^2} \le
\frac { C}  {R^{3-
\alpha}M^\al + 1}. \ee

Consider now the angular component of the rescaled vorticity.
Recall $\Om = \om_\th/r$. Let
$$
f=\Omega^M (X, T) = \Omega (X/M,  T/M^2) M^{-3} = \omega^M_\th (X, T)/R.
$$
Since $\omega^{M}_\th$ and $\nb \om^M_\th$ are bounded by
\eqref{eq:A1} and $\om^M_\th|_{R=0}=0$, we have $|f| \le C(1+R)^{-1}$.
From the equation of $\omega_\th$ (see \eqref{q-eq}), we have
\[
(\pd_T - L)f = g, \quad L = \Delta+ \frac{2}{R} \partial_{R} - b^M
\cdot \nabla_X ,
\]
where $g=R^{-2} \partial_Z (v_\theta^M)^2 $ and $b^M = v_r^M
e_R + v_z^M e_Z$.
Let $P(T, X; S, Y) $ be the evolution kernel for $ \partial_{T}- L$.
By Duhamel's formula
$$
f(X,T) = \int  P(T, X; S, Y)  f  ( Y,S) d Y+ \int_{S}^{T}  \int
P(T, X; \tau, Y) g (Y,\tau) d Y d\tau =: I + II.
$$
By Carlen and Loss \cite{MR1381974}, in particular its equation
(2.5), the kernel $P$ satisfies $P\ge 0$, $\int P(T, X; S, Y) d Y
= 1$ and, using $\|b^M \|_{\infty} < 1$,
\[
P(T, X; S, Y) \le C (T-S)^{-3/2} e^{- h(|X-Y|,T-S)}, \quad h(a,T) =
  C\frac{a^2}T (1-\frac Ta)_+^2.
\]
Using $e^{-h(a,T)} \le C e ^{-Ca/T}$ we get for $X=(X_1,X_2,X_3)$
and $Y=(Y_1,Y_2,Y_3)$
\be 
P(T, X; S, Y) \le C (T-S)^{-3/2} e^{- C |X_3-Y_3|/(T-S)}.
\ee
Here we only assert the spatial decay in the $X_3$ direction so that the proof
of \cite{MR1381974}, where the term $R^{-1}\pd_R$ in $L$ is not present,
needs no revision.
With these bounds 
and  H\"older inequality we get
\begin{equation}
\label{X4}
|I| \le \left [ \int P(T, X; S, Y) |f(Y,S )|^{3} d Y \right ]^{\frac 13}
\le
\left [ C (T-S)^{-\frac 32} \int 
e^{-C \frac {|X_3-Y_3|}{T-S}}
\frac {RdR}{(1+R)^3}
 d Z \right ]^{\frac 13}
    \le  C (T-S)^{-1/6},
\end{equation}
and
\begin{equation}
\label{X5}
|II| \le \int^{T}_{S} \int (T-\tau)^{-3/2} e^{- C\frac{|
X_3-Y_3|}{T-\tau}}\frac {R \, d R} {R^{3- \alpha} M^{\alpha}+ 1} \,
d Z \,d \tau \le C (T-S)^{1/2} M^{-2\alpha/3 }.
\end{equation}
Combining these two estimates and choosing  $S=T- M^{  \alpha}>-T_0M^2$
(hence $f$ is defined), we have $| f ( X,T)|   \le C M^{-\alpha/6}$.
Thus
$$
|\om_\th (x, t)| \le  |\om_\th^{M} (rM, z M, tM^2)| M^{2} \le
|\Omega^{M}(rM, z M, tM^2)| M^{2} r M \le CM^{3-\alpha/6} r.
$$
Therefore,  we have
\be\label{ob1} |\om_\th (x, t)| \le
CM^{2-\alpha/12}, \quad \text{for}\; r \le M^{-1+\alpha/12}. \ee

Let $b = v_re_r + v_ze_z$ and $B_{\rho}(x_0)=\{x:|x-x_0|<\rho\}$.  By
\eqref{vector-id} and \eqref{b-eq}, $b$ satisfies $ -\Delta b=\curl
(\om_\th e_\theta)$, and hence the following estimate with $p>1$ (see
e.g., \cite{MR0737190}, Theorem 8.17)
\begin{equation}
\label{b01}
\sup_{B_{\rho}(x_0)}|b|\le C(\rho^{-\frac
3{p}}||b||_{L^p(B_{2\rho}(x_0))}+\rho\sup_{B_{2\rho}(x_0)}|\om_\th|).
\end{equation}
Let $\rho=M^{-1+\alpha/24}$, $x_0\in \{(r,\theta,z):r<\rho\}$ and
$1<p<2$. By the assumption $|v| \le C/r$,
\[
\rho^{-\frac 3{p}}||b||_{L^p(B_{2\rho}(x_0))}\le
C\rho^{-\frac 3{p}}||1/r||_{L^p(B_{2\rho}(x_0))}\le
\frac C{2-p}\rho^{-1} \le C(p)M^{1-\alpha/24}.
\]
This together with \eqref{ob1},
\eqref{b01} and the fact
$|v_\th| = M |v_\th^M| \le
M C\min(R, R^{-1+\al}M^{-\al})$
imply
\begin{equation}\label{M1x}
|v(x, t)| \le C M^{1-\alpha/24}, \quad \text{for } r \le M^{-1+
\alpha/24}.
\end{equation}
On the other hand, the assumption $|v|\le C/r$ implies $|v|\le C
M^{1-\alpha/24}$ for $r\ge M^{-1+ \alpha/24}$. Since $M$ is the
maximum of $v$, this gives an upper bound for $M$.  \qed


\section*{Acknowledgments}

The authors would like to thank the National Center for Theoretical
Sciences at Taipei  and National Taiwan University for hosting part
of our collaboration. The research of Chen is partly supported by
the NSC grant 95-2115-M-002-008 (Taiwan). The research of Strain is
partly supported by the NSF fellowship DMS-0602513 (USA).  The
research of Tsai is partly supported by an NSERC grant (Canada). The
research of Yau is partly supported by the NSF grants DMS-0602038
and DMS-0804279 (USA).




\begin{bibdiv}
\begin{biblist}

\bib{MR673830}{article}{
   author={Caffarelli, L.},
   author={Kohn, R.},
   author={Nirenberg, L.},
   title={Partial regularity of suitable weak solutions of the Navier-Stokes
   equations},
   journal={Comm. Pure Appl. Math.},
   volume={35},
   date={1982},
   number={6},
   pages={771--831},
   issn={0010-3640},
   review={\MR{673830 (84m:35097)}},
}

\bib{MR1902055}{article}{
   author={Chae, Dongho},
   author={Lee, Jihoon},
   title={On the regularity of the axisymmetric solutions of the
   Navier-Stokes equations},
   journal={Math. Z.},
   volume={239},
   date={2002},
   number={4},
   pages={645--671},
   issn={0025-5874},
   review={\MR{1902055 (2003d:35205)}},
}

\bib{CSTY}{article}{
   author={Chen, Chiun-Chuan},
   author={Strain, Robert M.},
   author={Tsai, Tai-Peng},
   author={Yau, Horng-Tzer},
   title={Lower bound on the blow-up rate of the axisymmetric Navier-Stokes equations},
   journal={Int. Math. Res. Not.},
   volume={2008},
   date={2008},
   pages={article ID: rnn016, 31 pages},
}


\bib{MR1381974}{article}{
   author={Carlen, Eric A.},
   author={Loss, Michael},
   title={Optimal smoothing and decay estimates for viscously damped conservation laws,
   with applications to the $2$-D Navier-Stokes equation. A celebration of John F. Nash, Jr.},
   journal={ Duke Math. J.},
   volume={81},
   date={1995},
   number={1},
   pages={135--157},
   review={\MR{1381974 (96m:35199)}},
}

\bib{MR0093649}{article}{
   author={De Giorgi, Ennio},
   title={Sulla differenziabilit\`a e l'analiticit\`a delle estremali degli
   integrali multipli regolari},
   language={Italian},
   journal={Mem. Accad. Sci. Torino. Cl. Sci. Fis. Mat. Nat. (3)},
   volume={3},
   date={1957},
   pages={25--43},
   review={\MR{0093649 (20 \#172)}},
}

\bib{MR855753}{article}{
   author={Fabes, E. B.},
   author={Stroock, D. W.},
   title={A new proof of Moser's parabolic Harnack inequality using the old
   ideas of Nash},
   journal={Arch. Rational Mech. Anal.},
   volume={96},
   date={1986},
   number={4},
   pages={327--338},
   issn={0003-9527},
   review={\MR{855753 (88b:35037)}},
}

\bib{MR0737190}{book}{
   author={Gilbarg, David},
   author={Trudinger, Neil S.},
   title={Elliptic partial differential equations of second order},
   publisher={Springer-Verlag},
   place={Berlin},
   date={1983},
   pages={xii+513},
   isbn={3-540-13025-X},
   review={\MR{0737190 (86c:35035)}},
}

\bib{JFM60.0726.05}{article}{
    author={Leray, Jean},
     title={Sur le mouvement d'un liquide visqueux emplissant l'espace.},
  language={French},
   journal={Acta Math.},
    volume={63},
      date={1934},
     pages={193\ndash 248},
}

\bib{MR1465184}{book}{
   author={Lieberman, Gary M.},
   title={Second order parabolic differential equations},
   publisher={World Scientific Publishing Co. Inc.},
   place={River Edge, NJ},
   date={1996},
   pages={xii+439},
   isbn={981-02-2883-X},
   review={\MR{1465184 (98k:35003)}},
}

\bib{MR0170091}{article}{
   author={Moser, J{\"u}rgen},
   title={A new proof of De Giorgi's theorem concerning the regularity
   problem for elliptic differential equations},
   journal={Comm. Pure Appl. Math.},
   volume={13},
   date={1960},
   pages={457--468},
   issn={0010-3640},
   review={\MR{0170091 (30 \#332)}},
}

\bib{MR0100158}{article}{
   author={Nash, J.},
   title={Continuity of solutions of parabolic and elliptic equations},
   journal={Amer. J. Math.},
   volume={80},
   date={1958},
   pages={931--954},
   issn={0002-9327},
   review={\MR{0100158 (20 \#6592)}},
}

\bib{KNSS}{article}{
   author={Koch, G.},
   author={Nadirashvili, N.},
   author={Seregin, G.},
   author={Sverak, V.},
   title={Liouville theorems for the Navier-Stokes equations and applications},
   journal={preprint: http://arxiv.org/abs/0709.3599},
}

\bib{MR1685610}{article}{
   author={Tian, Gang},
   author={Xin, Zhouping},
   title={Gradient estimation on Navier-Stokes equations},
   journal={Comm. Anal. Geom.},
   volume={7},
   date={1999},
   number={2},
   pages={221--257},
   issn={1019-8385},
   review={\MR{1685610 (2000i:35166)}},
}

\end{biblist}
\end{bibdiv}

\end{document}